\newif\iffigures
\renewcommand{\todo}[2][]{\tikzexternaldisable\@todo[#1]{#2}\tikzexternalenable}
\definecolor{YEL}{RGB}{0, 255, 0} 
\definecolor{RED}{RGB}{185, 0, 0}
\definecolor{CYA}{RGB}{0, 194, 255}
\definecolor{BLU}{RGB}{0, 0, 128}
\definecolor{PIN}{RGB}{247, 127, 190}
\pgfplotsset{
tick label style={font=\small},
label style={font=\small},
legend style={font=\small},
grid style={dashed},}
\lstdefinestyle{compactCStyle}{%
mathescape=true,
frame=l,
numbers=left,
numberstyle=\tiny,
numbersep=5pt,
xleftmargin=15pt,
language=C,
morekeywords={min,boolean},
basicstyle={\footnotesize\ttfamily},
commentstyle=\rm,flexiblecolumns=true}
\newtheorem{definition}{Definition}[section]
\newtheorem{theorem}{Theorem}[section]
\newtheorem{proposition}{Proposition}[section]
\newenvironment{proof}{\textbf{Proof.}}{$\square$\\}
\newcommand{\tr}{^{\mathrm T}}
\newcommand{\magn}[1]{\left\vert #1 \right\vert}
\newcommand{\bitem}{\item[$\bullet$]}
\newcommand{\vv}{\mathbf{v}}
\newcommand{\tv}{\bar{v}}
\newcommand{\tvv}{\bar{\mathbf{v}}}
\newcommand{\cV}{\mathcal{V}}
\newcommand{\oVV}{\overline{\mathcal{V}}}
\newcommand{\vs}{\mathbf{s}}
\newcommand{\vz}{\mathbf{z}}
\newcommand{\df}{\doteq}
\newcommand{\RM}{$\mathsf{RM}$}
\newcommand{\NEG}[1]{\left[#1\right]_{-}}
\newcommand{\NEGs}[1]{[#1]_{-}}
\begin{document}

\title{Design and  Implementation of Distributed Resource Management for Time
    Sensitive Applications\thanks{The research leading to these results was supported by the Linneaus
    Center LCCC, the Swedish VR project n.2011-3635 ``Feedback-based
    resource management for embedded multicore platform'', and the
    Marie Curie Intra European Fellowship within the 7th European
    Community Framework Programme. An earlier version of parts of this
    paper appeared in \cite{Cha13} and its implementation framework
    appeared in \cite{MagECRTS}.}  }
    
    \author[1]{Georgios Chasparis\thanks{Corresponding Author: georgios.chasparis@scch.at}}
    \author[2]{Martina Maggio\thanks{martina.maggio@control.lth.se}}
    \author[2]{Enrico Bini\thanks{bini@control.lth.se}}
    \author[2]{Karl-Erik \AA rz\'en\thanks{karlerik@control.lth.se}}

   \affil[1]{{\small Software Competence Center Hagenberg GmbH, Softwarepark 21, A$-$4232 Hagenberg, Austria.}}
   \affil[2]{{\small Department of Automatic Control, Lund University, Box 118, SE$-$221 00 Lund, Sweden.}}

\date{June 4, 2013 \\ July 12, 2015 (revised)}

\maketitle

\begin{abstract}
In this paper, we address distributed convergence to \emph{fair} allocations of CPU resources for time-sensitive applications. We propose a novel resource management framework where a centralized objective for fair allocations is decomposed into a pair of performance-driven recursive processes for updating: (a) the allocation of computing bandwidth to the applications (\emph{resource adaptation}), executed by the resource manager, and (b) the service level of each application (\emph{service-level adaptation}), executed by each application independently. We provide conditions under which the distributed recursive scheme exhibits convergence to solutions of the centralized objective (i.e., fair allocations). Contrary to prior work on centralized optimization schemes, the proposed framework exhibits adaptivity and robustness to changes both in the number and nature of applications, while it assumes minimum information available to both applications and the resource manager. We finally validate our framework with simulations using the TrueTime toolbox in MATLAB/Simulink.
\end{abstract}

\section{Introduction}
The current trend in embedded computing demands that the number of applications sharing the same execution platform increases.  This is due to the increased capacity of the new hardware platforms, e.g., through the use of multi-core techniques. 
An example includes the move from federated to integrated system architectures in the automotive industry \cite{DiNatale10}. 

In such scenarios, the need for better mechanisms for controlling the rate of execution of each application becomes apparent. To this end, virtualization or resource reservation techniques~\cite{Mer94,Abe98a} are used. According to these techniques, each reservation is viewed as a \emph{virtual processor} (or \emph{platform}) executing at a fraction of the speed of the physical processor, i.e., the \emph{bandwidth} of the reservation. 
An orthogonal dimension along which the performance of an application can be tuned is the selection of its \emph{service level}. It is assumed that an application is able to execute at different service levels, where a higher service level implies a higher quality-of-service (QoS). Examples include the adjustable video resolutions and the adjustable sampling rates of a controller.

Typically this problem is solved by using a \emph{resource manager} (\RM), which is in charge of: (a) \textit{assigning virtual processors to the applications}, (b) \textit{monitoring the use of resources}, and (c) \textit{assigning the service level to each application}. The goal of the \RM\ is to maximize the overall delivered QoS.
This is often done through \textit{centralized optimization} and the
use of \textit{feedback} from the applications.

\RM's that are based on the concept of feedback, monitor the progress of the applications and adjust the virtual platforms based on measurements~\cite{Ste99,Eke00}. In these early approaches, however, quality adjustment was not considered. Instead, reference \cite{Cuc10} proposed an inner loop to control the resource allocation nested within an outer loop that controls the overall delivered quality.

Optimization-based resource managers have also received considerable attention~\cite{Raj97a,Lee99}. These approaches, however, rely on the
solution of a centralized optimization that determines \textit{both} the amount of assigned resources and the service levels
of all applications~\cite{Raj97a,Soj11,Bin11}. In the context of networking, reference~\cite{Joh06} models the service provided by a set of servers to workloads belonging to different classes as a utility maximization problem. However, there is no notion of adjustment of the service level of the applications.

An example of a combined use of optimization and feedback was developed in the ACTORS project~\cite{Bin11,Arz11}. In that project, applications provide a table to the \RM\ describing the required amount of CPU resources and the expected QoS achieved at each supported service level~\cite{Bin11,Arz11}.  In the multi-core case, applications are partitioned over the cores and the amount of resources is given for each individual partition. Then, the \RM\
decides the service level of all applications and how the partitions should be mapped to physical cores using a combination of Integer Linear Programming (ILP) and first-fit-decrease (FFD) for bin packing.

On-line centralized optimization schemes have several weaknesses. First, the complexity of the solvers used to implement the \RM\ (such as ILP solvers) grows significantly with the number of applications. It is impractical to have a \RM\ that optimally assigns resources at the price of a large consumption of resources by the \RM\
itself.  Second, to enable a meaningful formulation of a cost function in such optimization problems, the \RM\ must compare the quality delivered by different applications.  This comparison is unnatural because the concept of quality is extremely application dependent. Finally, a proper assignment of service levels requires application
knowledge. In particular, applications must inform the \RM\ about the available service levels and the expected consumed resources at each service level, increasing significantly communication complexity. 

To this end, distributed optimization schemes have recently attracted considerable attention. Reference~\cite{Sub08} considered a cooperative game formulation for job allocation to service providers in grid computing. Reference~\cite{Wei10} proposed a non-cooperative game-theoretic formulation to allocate computational resources to a given number of tasks in cloud computing. Tasks have full knowledge of the available resources and try to maximize their own utility function. Similarly, in \cite{Gro05} the load balancing problem is formulated as a non-cooperative game. 

Contrary to the grid computing setup of \cite{Sub08} or the load balancing problem of \cite{Gro05,Wei10}, this paper addresses a lower-level resource allocation problem, that is, \emph{the establishment of fair allocations of CPU bandwidth among time-sensitive applications which adjust their own service levels}. Contrary to the cloud computing setup of \cite{Wei10}, a game-theoretic formulation may not easily be motivated practically when addressing such lower-level (single node) resource allocation problems. Instead, we propose a distributed optimization scheme, according to which a centralized objective for fair allocations is decomposed into a pair of performance-driven recursive processes for updating: (a) the allocation of computing bandwidth to the applications (\emph{resource adaptation}), executed by the \RM, and (b) the service level of each application (\emph{service-level adaptation}), executed by each application independently. We provide conditions under which the distributed recursive scheme exhibits convergence to fair allocations). 

The proposed scheme introduces a design technique for allocating computing bandwidth to \textit{time-sensitive applications}, i.e., applications whose performance is subject to strict time deadlines, such as multimedia and control applications. In particular, the proposed scheme: (a) exhibits linear complexity with the number of applications, (b) drops the assumption that the \RM\ has knowledge of application details, and (c) exhibits adaptivity and robustness to the number and nature of applications. This paper extends the theoretical contributions of \cite{Cha13} by addressing global convergence and asynchronous updates. 
Furthermore, reference~\cite{MagECRTS} presents the full implementation framework in Linux. 

The paper is organized as follows. Section~\ref{sec:framework}
provides the overall framework, while Section~\ref{sec:LearningDynamics} presents the distributed scheme for resource allocation. Section~\ref{sec:convergence} presents the convergence behavior for the synchronous and asynchronous case. Section
\ref{sec:TechnicalDerivation} presents technical details required for
the derivation of the main results in Section~\ref{sec:convergence}. Section~\ref{sec:expsim} provides selective simulations. Finally, Section \ref{sec:conc}
presents concluding remarks.

\textit{Notation:} 
\begin{itemize}
\bitem $\Pi_{[a,b]}$ is the projection onto the set $[a,b]$.
\bitem For some finite sequence $\{x_1,x_2,...,x_n\}$ in $\mathbb{R}$,
  define ${\rm col}\{x_1,x_2,...,x_n\}$ to be the column vector in $\mathbb{R}^{n}$ with entries $\{x_1,x_2,...,x_n\}$.
\bitem For any $x\in\mathbb{R}$, define the operator $\NEG{x}$ as follows:
\begin{equation*}
\NEG{x} \triangleq \begin{cases}
x, & x\leq{0} \cr
0, & x > {0}.
\end{cases}
\end{equation*}
\bitem For any $x\in\mathbb{R}^{n}$ and set $A\subset{\mathbb{R}^{n}}$, define ${\rm dist}(x,A)\df\inf_{y\in{A}}\|x-y\|,$ where $\|\cdot\|$ denotes the Euclidean norm.
\bitem For some finite set $A$, $\magn{A}$ denotes the cardinality of $A$.
\end{itemize}

\section{Framework \& Problem Formulation} 
\label{sec:framework}

\subsection{Resource manager \& applications}
\label{sec:RMapp}

The overall framework is illustrated in Figure~\ref{fig:framework}. A
set $\mathcal{I}$ of $n$ (time-sensitive) applications are sharing the same CPU platform. Let $i$ be a representative element of this set. Since we allow applications to dynamically join or leave, the number $n$ may not be constant over time. 

The resources are managed by a \RM\ that allocates resources through a Constant Bandwidth Server (CBS)~\cite{Abe98a} with period $P_i$ and budget $Q_i$. Hence, application $i$ is assigned a \emph{virtual platform} with bandwidth $v_i=Q_i/P_i$ corresponding to a fraction of the computing power (or speed) of a single CPU. 
Obviously, not all virtual platforms $v_i$ are feasible, since their sum cannot exceed the number $\kappa$ of
available CPU's. Formally, we define the set of \textit{\textbf{feasible virtual platforms}}, $(v_1,\ldots,v_n)$, as
\begin{equation}
  \label{eq:feasibleVPs}
  \cV \df \Big\{\vv=(v_1,...,v_n)\in[0,1]^n:\sum_{i=1}^nv_i\leq \kappa\Big\}.
\end{equation}
\begin{figure}[t]
  \centering
  \includegraphics[width=.7\columnwidth]{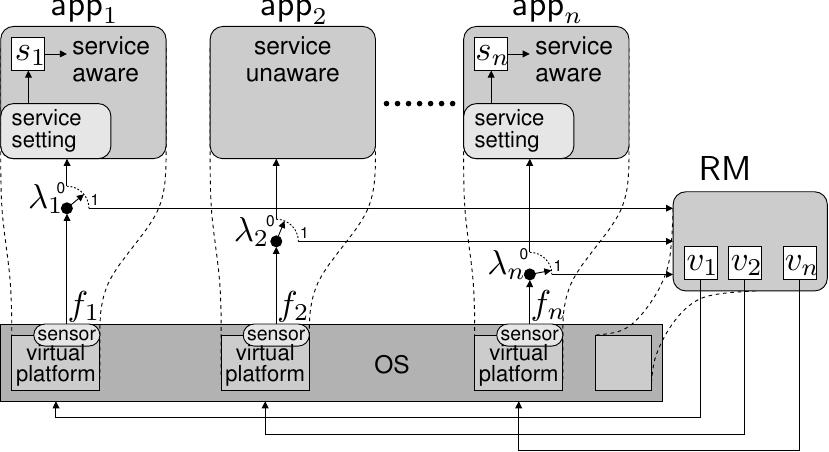}
  \caption{Resource management framework.}
  \label{fig:framework}
\end{figure}
%
%

In this study, the main concern is the computation of the allocation $\vv$ in real time such that a centralized objective is achieved. However, we will not be concerned with the exact mapping of this allocation onto the available cores. Such mapping can be performed by a standard first-fit-decrease algorithm. Furthermore, in practice, more constraints might be present, especially if applications are single-threaded (i.e., they may only run on a single core). In this case, the above feasibility constraint will be a relaxed version of the original problem, however, the forthcoming analysis can be modified in a straightforward manner to incorporate additional constraints on $\cV$.

Each application $i\in\mathcal{I}$ may change its \textit{service level}, $s_i$. It represents a qualitative indicator of the delivered quality of application $i$, assuming sufficient amount of resources $v_i$. Naturally, it can be represented by a real number $s_i\in\mathcal{S}_i\df[\underline{s}_i,\infty)\subset\mathbb{R}$, where $\underline{s}_i>0$ is the minimum possible service level of application $i$. The domain $\mathcal{S}_i$ inherits the partial ordering from $\mathbb{R}$, according to which $s_i'\leq s_i''$ implies that the quality delivered at service level $s_i'$ is smaller than or equal to the corresponding quality delivered at $s_i''$. The physical interpretation of the service level may only be realized in the context of a specific application. It may represent any quality indicator of the application, e.g., the inverse of the accuracy of an iterative optimization routine, the details of an MPEG player and the sampling frequency of a controller. We denote $\vs\df{\rm col}\{s_1,...,s_n\}$ the service level profile of all applications evolving within $\mathcal{S}\df \mathcal{S}_1\times...\times \mathcal{S}_n$. 

We implicitly assume here that an application may always increase its service level providing the necessary resources, however, in practice it will always be constrained due to the constraints imposed in $v_i$.  Note, finally, that the service level $s_i$ is an \textit{internal} state of application $i$, i.e., it can be written/read only by $i$.

\subsection{The matching function}
\label{sec:matchFi}

To be able to assess the performance of a time-sensitive application, it is necessary to introduce a \textit{performance function}. The \RM\ is able to measure at any time $t\geq{0}$, (a) the \emph{soft-deadline} of each application $i$, $D_i(t)$, which is the time duration of its last CPU reservation, and (b) the corresponding \emph{job-response time}, $R_i(t)$, which is the time elapsed from the start time to the finishing time of a job during its last reservation.  A natural definition of such performance function for time-sensitive applications is the following \textit{matching function}:
\begin{equation}
 f_i(t) \df \frac{D_i(t)}{R_i(t)}-1,
 \label{eq:matching}
\end{equation}
Note that $f_i \geq -1$, a property that will be used often.

Based on the above definition, we define a \textit{perfect} matching between $D_i$ and $R_i$ to be the situation at which $|f_i|\leq\delta$, for some small $\delta>0$. This is the case when application $i$ has the correct amount of resources. Instead, a \textit{scarce} matching describes a situation at which $f_i < -\delta$, i.e., when application $i$ does not have enough resources, and an \textit{abundant} matching describes a situation at which $f_i > \delta$, i.e., when application $i$ has more than enough resources.

\subsubsection{Nominal matching function}

The matching function depends indirectly on the virtual platform $v_i$ and the service level $s_i$ of application $i$. For a large class of applications, we may derive a nominal representation of the matching function, denoted $\varphi_i$, as a function of $s_i$ and $v_i$ as follows:
\begin{equation}
  \varphi_i(s_i,v_i) \df \beta_i\frac{v_i}{s_i}-1,
  \label{eq:fDefSimple}
\end{equation}
for some positive constant $\beta_i$. For example, for \textit{multimedia applications}, the soft deadline $D_i$ can be considered constant, while the response time can be defined as $R_i=\nicefrac{C_i}{v_i}$, where $C_i=\alpha_is_i$ is the execution time per job (at a service level $s_i$) and $v_i$ is the speed of execution.  Similarly, in \textit{control applications}, $R_i=\nicefrac{C_i}{v_i}$ where $C_i$ denotes nominal time of execution, while the soft deadline $D_i$ is considered inverse proportional to the sampling frequency (or service level) $s_i$, i.e.,
$D_i=\nicefrac{\alpha_i}{s_i}$. Both cases lead to a matching function with the form of (\ref{eq:fDefSimple}).

It is evident that the nominal matching function (\ref{eq:fDefSimple}) satisfies the following properties: For some
$s_i,s_i'\in\mathcal{S}_i$ and $v_i,v_i'\in\cV$: (P1) $s_i\neq 0 \Rightarrow \varphi_i(s_i,0) < 0$, that is, the
matching must certainly be scarce if no resources are assigned; (P2) $s_i \geq s_i'\Rightarrow \varphi_i(s_i,v_i) \leq \varphi_i(s_i',v_i)$, if application $i$ lowers its service level, then the performance should not decrease; (P3) $v_i \geq v_i'\Rightarrow \varphi_i(s_i,v_i) \geq \varphi_i(s_i,v_i')$, if the bandwidth of application $i$ decreases, then the performance should not increase.

\subsection{Application weights}
\label{sec:lambdaI}

The \RM\ may also assign weights to the applications. We introduce the weight $\lambda_i\in(0,1]$ to represent the \emph{importance} that the \RM\ assigns to application $i$ when adjusting its virtual platform $v_i$.  As we shall see in a forthcoming section, the weights $\{\lambda_i\}$ will determine the
direction of adjustment of the virtual platforms $\{v_i\}$ by the \RM. 
The weights $\{\lambda_i\}$ are considered given and determined by the \RM.

\subsection{Fair allocations \& objective}	\label{sec:FairAllocations}

To define \emph{fair allocations}, for each application $i$, we introduce the following \emph{\textbf{nominal fairness measure}}:
 \begin{eqnarray} \label{eq:FairAllocationMeasure} 
  \lefteqn{{\Phi}_i(\vs,\vv) \df}\cr && -(1-\tv_i)\lambda_i\NEG{\varphi_i(s_i,v_i)} + \tv_i \sum_{j\neq{i}}\lambda_j \NEG{\varphi_j(s_j,v_j)},
\end{eqnarray}
where $\tv_i\df\nicefrac{v_i}{\kappa}$ is the normalized virtual platform of $i$ over the number of cores.

The function $\Phi_i$ captures the deficiency in resources of application $i$ compared to the rest of applications. When application $i$ is not performing well, i.e., $\varphi_i(s_i,v_i)<0$, and its available resources $v_i$ are small, while the rest of applications are performing well, we should expect large values for $\Phi_i$. 
%
\begin{definition}[Fair allocation] \label{def:FairAllocation} 
For some service level profile $\vs\in\mathcal{S}$, a virtual platform profile $\vv^*\in\cV$ is fair or balanced if $\Phi_i(\vs,\vv^*)\equiv{0}$ for all $i\in\mathcal{I}$.
\end{definition}

According to Definition~\ref{def:FairAllocation}, an allocation of virtual platforms $\vv^*$ is \emph{fair} for application $i$ only if $v_i^*\neq{0}$, since at zero resources $\varphi_i(s_i,0)<0$ and $\Phi_i(\vs,\vv)<0$. Thus, an allocation $\vv^*$ is \emph{fair} if either (a) $\NEGs{\varphi_i(s_i,v_i^*)}\equiv{0}$ for all $i$ or (b) $\NEGs{\varphi_i(s_i,v_i^*)}<0$ for all $i$ and the ratio of resources $\nicefrac{\tv_i^*}{1-\tv_i^*}$ coincides with the corresponding ratio of weighted matching functions. Since $\NEGs{\varphi_i}\in[-1,0]$, case (b) implies that \textit{the resources are balanced with the negative performances}. For example, if $\tv_i^*$ is large compared to the rest $1-\tv_i^*$, then $\NEGs{\varphi_i}$ has to be sufficiently negative, i.e., application $i$ should not perform so well compared to the rest. Informally, there could not be application $i$ that monopolizes the resources at a fair allocation when $i$ performs well and the others do not. 


The above fairness definition introduces a potential centralized problem for fair allocations.
\begin{eqnarray}
\centering
\left\{\begin{array}{ll}
  \min_{\vs\in\mathcal{S},\vv\in\cV} & \sum_{i\in\mathcal{I}}\magn{\Phi_i(\vs,\vv)} \cr
  \mbox{s.t.} & \varphi_i(s_i,v_i) = 0, \quad \forall i\in\mathcal{I}.
\end{array}\right.
\end{eqnarray}
However, neither the \RM\ nor application $i$ has complete knowledge of the details of the nominal matching function $\varphi_i(s_i,v_i)$. Thus, on-line centralized optimization is highly prohibited. Instead, optimization may only be based upon measurements collected during run-time. 


\section{Adjustment Dynamics}
\label{sec:LearningDynamics}

In this section, the centralized objective of fair allocations is decomposed into a pair of performance-driven recursive schemes, executed independently by the \RM\ and the applications, thus avoiding the computation and communication complexity of centralized optimization.

\subsection{Resource adaptation}
\label{sec:LearningDynamicsRM}

The $\mathsf{RM}$ updates the bandwidth $\tv_i=\nicefrac{v_i}{\kappa}$, normalized with respect to the number of cores $\kappa$.  The \emph{unused bandwidth} is $v_r = \kappa-\sum_{i=1}^{n}v_i,$ and its normalized version is
$\tv_r = 1-\sum_{i=1}^{n}\tv_i$. 
At time instances $t_k$, $k=0,1,\ldots$ the \RM\ assigns resources as follows:
\begin{enumerate}
\item It measures the matching function $f_i = f_i(t_k)$ for each $i\in\mathcal{I}$, and computes $\NEGs{f_i(t_k)}$.
\item It updates the normalized resource allocation vector $\tvv \df (\tv_1,...,\tv_n)$ as follows:
  \begin{equation}	\label{eq:RecursionForResources}
    \tv_i(t_{k+1}) = \Pi_{\oVV_i}\Big[\tv_i(t_k) + \epsilon F_i(t_k) \Big]
  \end{equation}
  for each $i=1,...,n$, where $\oVV_i\df[0,\nicefrac{1}{\kappa}]$ and $F_i(t_k)$ is the \emph{observed} fairness measure defined as follows: 
  \begin{eqnarray*}  
  \lefteqn{F_i(t_k) \df } \cr &&  -(1-\tv_i(t_k))\lambda_i \NEGs{f_i(t_k)} + \tv_i(t_k) \sum_{j\neq{i}}\lambda_j \NEGs{f_j(t_k)}.
  \end{eqnarray*}
  Furthermore, the unused bandwidth is updated according to $\tv_r(t_{k+1}) = 1-\sum_{i=1}^{n}\tv_i(t_{k+1})$.
\item It computes the original bandwidths by setting $v_i(t_{k+1}) = \kappa\,\tv_i(t_{k+1})$.
\item It updates the time index $k\leftarrow {k+1}$ and repeats.
\end{enumerate}

Note that according to the definition of $F_i(t_k)$, if there is a deficiency of resources for $i$, i.e., $F_i(t_k)>0$, then $\tv_i$ will increase, otherwise it will decrease. We consider a \emph{constant} step size $\epsilon>0$, since it provides an adaptive response to changes in the number of applications.
In some cases, we will use vector notation, denoting $\tvv\df {\rm col}\{\tv_1,...,\tv_n\}$ which evolves over $\oVV\df\oVV_1\times...\times\oVV_n$.


Recursion (\ref{eq:RecursionForResources}) for the adjustment of resources was motivated by the standard \textit{replicator dynamics} (cf.,~\cite[Chapter~3]{Weibull97}) and in particular the discrete-time equivalent (namely \textit{reinforcement learning}) introduced in \cite{ChasparisShamma11_DGA}.
Note that the \RM\ time complexity is linear with respect to the number of applications, as demonstrated in \cite{MagECRTS}.

\subsection{Service level adaptation}
\label{sec:AppAdjust}

The \RM\ provides information to each application $i$ through an observation signal $Y_i(t_k)$, $k=0,1,...$, that captures its performance. Applications are designed to adjust their service levels based on $Y_i(t_k)$ as follows: 
\begin{equation}	
\label{eq:RecursionForApplications}
 s_i(t_{k+1}) = \Pi_{\mathcal{S}_i}\left[s_i(t_k) + \epsilon Y_{i}(t_k)\right], \quad i\in\mathcal{I}.
\end{equation}
A natural selection for the observation signal is to set $Y_i(t_k)\equiv f_i(t_k)$, i.e., the observed matching function. In this scenario, the application $i$ will increase its service level if $f_i(t_k) > 0$, otherwise it will decrease it. Alternative observation terms can also be defined with similar properties as demonstrated in \cite{Cha13}.

\section{Convergence}
\label{sec:convergence}

In this section, a characterization of the convergence properties of the proposed distributed scheme is provided in case of (a) \emph{synchronous} applications' updates, and (b) \emph{asynchronous} applications' updates. 
Asynchronous updates constitute a form of perturbation of the nominal synchronous behavior which may alter significantly the performance of the scheme. Perturbations due to measurement noise are not present, since the \RM\ has direct access to the response time of each application. However, internal uncertainties of an application may result in small deviations from its nominal matching function. Due to the small probability density of such events, we will not discuss robustness with respect to such uncertainties, i.e., \textit{\textbf{for the remainder of the paper}, we consider $f_i(t) \equiv \varphi(s_i(t),v_i(t))$, where the nominal matching function satisfies (\ref{eq:fDefSimple}).}

\subsection{Feasibility}
\label{sec:Feasibility}

The first property of the proposed adjustment process is the feasibility of the resulting virtual platforms. 
\begin{proposition}[Feasible allocations] \label{Pr:FeasibleAllocations} For sufficiently small step size $\epsilon=\epsilon(n)>0$, the update recursion of projected virtual platforms (\ref{eq:RecursionForResources}) leads to a sequence of virtual platforms $\{\vv(t_k)\}$ which satisfies  $\vv(t_k)\in\cV$ for all $k=0,1,...$ as long as $\vv(t_0)\in\cV$.
\end{proposition}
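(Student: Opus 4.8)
The plan is to reduce the feasibility claim to a single scalar invariant and prove it by induction on $k$. Since the update (\ref{eq:RecursionForResources}) projects every coordinate onto $\oVV_i=[0,\nicefrac{1}{\kappa}]$, we automatically have $\tv_i(t_k)\in[0,\nicefrac{1}{\kappa}]$, hence $v_i(t_k)=\kappa\,\tv_i(t_k)\in[0,1]$, for every $k\ge 1$ (and for $k=0$ by the hypothesis $\vv(t_0)\in\cV$, which gives $v_i(t_0)\in[0,1]$). Thus the box part of $\cV$ in (\ref{eq:feasibleVPs}) is free, and the only thing to establish is that the sum constraint $\sum_i\tv_i(t_k)\le 1$, equivalently $\tv_r(t_k)\ge 0$, is preserved. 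The induction base is exactly $\vv(t_0)\in\cV$.

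First I would record an algebraic identity for $F_i$. Writing $g_j\df\lambda_j\NEGs{f_j}$ and $G\df\sum_j g_j$, a direct expansion of the definition of $F_i(t_k)$ collapses its two terms into $F_i=-g_i+\tv_i\,G$, whence $\sum_i F_i=-G+G\sum_i\tv_i=-G\,\tv_r$. Because $\NEGs{f_j}\le 0$ and $\lambda_j>0$ we have $g_j\le 0$ and hence $G\le 0$, so $\sum_i F_i=-G\,\tv_r\ge 0$ whenever the current iterate is feasible. This identity is the engine of the argument: in the absence of projection the total allocated bandwidth moves toward $1$ by an amount proportional to the slack $\tv_r$, so that $S\df\sum_i\tv_i$ updates through the affine map $S\mapsto(1-\epsilon(-G))\,S+\epsilon(-G)$, a contraction toward $1$ as soon as $\epsilon(-G)\in[0,1]$.

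The step I expect to be the main obstacle is controlling the projection, since the lower clamp raises a coordinate that would otherwise go negative and could in principle push the sum above $1$, destroying the affine-map picture. The key observation is that for small $\epsilon$ this lower clamp is never active. From $F_i=-g_i+\tv_i\,G$ with $-g_i\ge 0$ and $G\ge -n$ (each $g_j\ge -1$), one gets the coordinatewise bound $F_i\ge -n\,\tv_i$, hence $\tv_i+\epsilon F_i\ge \tv_i(1-\epsilon n)\ge 0$ whenever $\epsilon\le\nicefrac{1}{n}$. Consequently only the upper clamp to $\nicefrac{1}{\kappa}$ can fire, and since $\Pi_{\oVV_i}[x]\le x$ for every $x\ge 0$, the projection can only \emph{decrease} the sum.

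Finally I would assemble the pieces. Applying the previous bound coordinatewise, $\sum_i\tv_i(t_{k+1})\le\sum_i\bigl(\tv_i(t_k)+\epsilon F_i(t_k)\bigr)=S_k+\epsilon(-G)\,\tv_r(t_k)=(1-\epsilon(-G))\,S_k+\epsilon(-G)$. Since $-G=\sum_j\lambda_j\,|\NEGs{f_j}|\le n$ (using $\lambda_j\le 1$ and $|\NEGs{f_j}|\le 1$, the latter from $f_j\ge -1$), the choice $\epsilon(n)=\nicefrac{1}{n}$ yields $\epsilon(-G)\in[0,1]$; combined with the induction hypothesis $S_k\le 1$, this convex combination is at most $1$. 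That closes the induction and proves $\vv(t_k)\in\cV$ for all $k$. The only genuinely delicate point is the coordinatewise estimate $F_i\ge -n\,\tv_i$ that excludes the lower projection; the rest is the affine-contraction bookkeeping above.
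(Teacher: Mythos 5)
Your proof is correct and follows essentially the same route as the paper's: the same collapse of $\sum_i F_i(t_k)$ into a multiple of the slack $1-\sum_i \tv_i(t_k)$, giving an affine map that keeps $\sum_i\tv_i\le 1$ for $\epsilon\le\nicefrac{1}{n}$, followed by the observation that the projection can only decrease the sum. The one place you go beyond the paper is the explicit coordinatewise bound $F_i\ge -n\,\tv_i$ showing the lower clamp at $0$ never fires --- the paper simply asserts that the projection is only ever activated at the upper bound $\nicefrac{1}{\kappa}$, so your argument makes rigorous a step the paper leaves implicit.
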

\begin{proof}
  See Appendix~\ref{Ap:FeasibleAllocations}.
\end{proof}

\subsection{Minimum guarantees}
\label{sec:MinimumGuarantees}

The adjustment process guarantees \emph{starvation avoidance}, i.e., a positive amount of resources (at least $\epsilon>0$) to all applications with non-zero weight. Furthermore, it guarantees a \emph{balance} condition, according to which, in overloaded CPU's, no application is able to monopolize resources.

Before stating formally these observations, define: 
\begin{itemize}
\item $L \df \sup_{i\in\mathcal{I},k\in\mathbb{N}}|F_i(t_k)|<\infty$, 
\item $\lambda\df\min_{i\in\mathcal{I}}\lambda_i > 0$.
\end{itemize}

\begin{proposition}[Starvation avoidance] \label{Pr:StarvationAvoidance}
There exists $\epsilon^*=\epsilon^*(n)<\nicefrac{1}{(L+1)\kappa}$ with $\epsilon^*\to{0}$ as $n\to\infty$, such that for any step size $\epsilon\leq\epsilon^*$,  $\inf_{k\in\mathbb{N}}\tv_i(t_k)>\epsilon$ for all $i$.
\end{proposition}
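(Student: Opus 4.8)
The plan is to prove that the region $\{\tvv\in\oVV : \tv_i>\epsilon\ \text{for all } i\}$ is forward-invariant under recursion (\ref{eq:RecursionForResources}), so that a positive initialization $\tv_i(t_0)>\epsilon$ (which the \RM\ can always impose) propagates to every $k$. The mechanism is a \emph{scarcity barrier}: when $\tv_i$ is small, application $i$ is necessarily scarce, i.e. $f_i<0$, hence $\NEGs{f_i}=f_i$ and the term $-(1-\tv_i)\lambda_i\NEGs{f_i}$ is strictly positive; since the competing term $\tv_i\sum_{j\neq i}\lambda_j\NEGs{f_j}$ carries the prefactor $\tv_i$, it is negligible for small $\tv_i$, so $F_i\geq 0$ and $\tv_i$ cannot decrease. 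First I would record two elementary facts: $\NEGs{f_j}\in[-1,0]$ (because $f_j\geq-1$), and the projection $\Pi_{\oVV_i}$ onto $\oVV_i=[0,1/\kappa]$ is monotone and nonexpansive, so by $|\Pi_{\oVV_i}(x)-\Pi_{\oVV_i}(\tv_i(t_k))|\leq|x-\tv_i(t_k)|$ a single update obeys $\tv_i(t_{k+1})\geq\tv_i(t_k)-\epsilon L$.

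Then I would make the barrier quantitative. Set $a_i\df \underline{s}_i/(\beta_i\kappa)$. Using $\varphi_i(s_i,v_i)=\beta_i\nicefrac{v_i}{s_i}-1$ with $v_i=\kappa\tv_i$ and $s_i\geq\underline{s}_i$, for every $\tv_i\leq a_i$ one gets $f_i\leq0$, hence $\NEGs{f_i}=f_i$ and $-f_i\geq 1-\tv_i/a_i\geq0$ (the worst case $s_i=\underline{s}_i$; this is where property (P1) anchors the argument, since $f_i\to-1$ as $\tv_i\to0$). Bounding the competing term by $\tv_i\sum_{j\neq i}\lambda_j\NEGs{f_j}\geq-\tv_i(n-1)$ and using $\lambda_i\geq\lambda$ yields
\[
F_i\;\geq\;(1-\tv_i)\,\lambda\Big(1-\tfrac{\tv_i}{a_i}\Big)-\tv_i\,(n-1)\;\df\;g_i(\tv_i),\qquad \tv_i\leq a_i .
\]
Since $g_i(0)=\lambda>0$ and $g_i$ is continuous, there is $\theta_i\in(0,a_i)$ with $g_i>0$ on $[0,\theta_i]$; put $\theta\df\min_i\theta_i>0$ (finitely many applications). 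Thus $\tv_i\leq\theta\Rightarrow F_i\geq0$. Note $\theta_i=O(1/n)$, because the linear term $-\tv_i(n-1)$ dominates, so $\theta\to0$ as $n\to\infty$.

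Next I would fix $\epsilon^*$ to be any value strictly below $\min\{\theta,1/\kappa\}/(L+1)$; this is strictly less than $1/((L+1)\kappa)$ and tends to $0$ as $n\to\infty$ since $\theta=O(1/n)$. For $\epsilon\leq\epsilon^*$ one then has $\theta-\epsilon L>\epsilon$ and $1/\kappa>\epsilon$. The invariance follows by induction: assume $\tv_i(t_k)>\epsilon$. If $\tv_i(t_k)\leq\theta$, then $F_i(t_k)\geq0$, so the argument of the projection is $\geq\tv_i(t_k)>\epsilon$ and monotonicity gives $\tv_i(t_{k+1})\geq\tv_i(t_k)>\epsilon$. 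If instead $\tv_i(t_k)>\theta$, the one-step bound gives $\tv_i(t_{k+1})>\theta-\epsilon L>\epsilon$ (the clipping at $1/\kappa>\epsilon$ preserves the lower bound). In either case $\tv_i(t_{k+1})>\epsilon$, and since every landing from above $\theta$ is $\geq\theta-\epsilon L>\epsilon$ while the sub-$\theta$ region is non-decreasing, the values stay bounded away from $\epsilon$, giving the strict uniform bound $\inf_k\tv_i(t_k)>\epsilon$.

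I expect the main obstacle to be establishing the barrier \emph{uniformly in time}: both the service levels $s_i(t_k)$ and the other applications' states enter $F_i$ and evolve through the coupled recursions (\ref{eq:RecursionForResources})--(\ref{eq:RecursionForApplications}), so the estimate $F_i\geq g_i(\tv_i)$ must hold simultaneously for all admissible $s_i\geq\underline{s}_i$ and all $\NEGs{f_j}\in[-1,0]$; taking the worst cases $s_i=\underline{s}_i$ and $\NEGs{f_j}=-1$ is precisely what renders $\theta$ a genuine time-independent constant. A secondary point to flag is the dependence on initialization: since $\tv_i(t_0)=0$ is feasible yet would force $\inf_k\tv_i(t_k)=0$, the statement must be read with a positive start $\tv_i(t_0)>\epsilon$ (equivalently, as invariance of $\{\tv_i>\epsilon\}$), and the core $\kappa$ enters only through the range $\oVV_i=[0,1/\kappa]$ and the definition of $a_i$.
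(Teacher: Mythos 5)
Your proof is correct and follows essentially the same route as the paper: both rest on the scarcity barrier (small $\tv_i$ forces $f_i$ strictly negative, hence a nonnegative drift $F_i$, with the competing term controlled by its $\tv_i$ prefactor) combined with the bounded step $\epsilon L$, yielding a threshold $\epsilon^*=O(\nicefrac{1}{n})$. The only cosmetic difference is that the paper argues inside the $\epsilon$-scaled band $(\epsilon,(L+1)\epsilon]$ where the drift is strictly positive, whereas you use a fixed $O(\nicefrac{1}{n})$ threshold $\theta$ with an induction; your explicit treatment of the required initialization $\tv_i(t_0)>\epsilon$ and of the strict bound $\inf_k \tv_i(t_k)>\epsilon$ matches, and in fact slightly sharpens, the paper's own conclusion (which ends with ``$\geq\epsilon$'').
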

\begin{proof}
See Appendix~\ref{Ap:StarvationAvoidance}.
\end{proof}
%

\begin{proposition}[Balance] \label{Pr:Balance}
Pick $0<\zeta\leq\nicefrac{1}{\kappa}$ such that $\max_{i\in\mathcal{I}}\{\beta_i\kappa\zeta/\underline{s}_i-1\} < 0$. For any $\epsilon=\epsilon(\zeta)<\zeta/L$, there exists a number of applications $n^*=n^*(\zeta)$ such that, for any set of applications $\mathcal{I}$ with $\magn{\mathcal{I}}\geq{n^*}$ and for any $i\in\mathcal{I}$, the following hold: 
\begin{enumerate}
\item if $\tv_i(t_0)>\zeta$, then $\tv_i(t_k)\leq \zeta$ after a finite $k$; 
\item if $\tv_i(t_0)\leq\zeta$, then $\tv_i(t_k)\leq\zeta$ for all $k=1,2,...$.
\end{enumerate}
Also, as $\zeta\to{0}$, $n^*(\zeta)\to \infty$ and $\zeta n^*(\zeta)\to{c}$, for some positive constant $c$.
\end{proposition}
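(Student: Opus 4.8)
The plan is to reduce both parts to a single \emph{drift estimate} on $F_i$ showing that, once the population is large, any application above the threshold $\zeta$ is pushed down, while the sublevel set $\{\tv_i\le\zeta\}$ is forward invariant.

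First I would turn the hypothesis on $\zeta$ into a scarcity statement. Put $\delta_0\df-\max_{i}\{\beta_i\kappa\zeta/\underline{s}_i-1\}>0$. Since $f_i=\varphi_i(s_i,v_i)=\beta_i\kappa\tv_i/s_i-1$ and $s_i\ge\underline{s}_i$, every application with $\tv_i\le\zeta$ is \emph{strictly scarce}: $\NEGs{f_i}=f_i\le-\delta_0$, hence $\lambda_i\NEGs{f_i}\le-\lambda\delta_0$. By Proposition~\ref{Pr:FeasibleAllocations}, $\sum_j\tv_j\le1$, so at any instant at most $m\df\lfloor1/\zeta\rfloor$ applications can exceed $\zeta$; thus among the indices $j\neq i$ at least $n-1-m$ are strictly scarce. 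Dropping the nonpositive remaining terms gives $\sum_{j\neq i}\lambda_j\NEGs{f_j}\le-(n-1-m)\lambda\delta_0\df-A$, and combining with $-(1-\tv_i)\lambda_i\NEGs{f_i}\in[0,1]$ (as $\NEGs{f_i}\in[-1,0]$ and $\lambda_i\le1$) yields the master estimate
\[
F_i(t_k)\ \le\ 1-\tv_i(t_k)\,A,
\]
valid in both regimes $\tv_i\le\zeta$ and $\tv_i>\zeta$.

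Next I would fix $n^*=n^*(\zeta)$ so large that $(\zeta-\epsilon L)A\ge1$; this is possible because $\epsilon<\zeta/L$ forces $\zeta-\epsilon L>0$ while $A$ grows linearly in $n$, and it sets $n^*=1+m+\lceil 1/((\zeta-\epsilon L)\lambda\delta_0)\rceil$. For Claim~1, while $\tv_i(t_k)>\zeta$ the master estimate gives $F_i\le1-\tv_i(t_k)A\le1-\zeta A\df-\gamma<0$; since $\epsilon L<\zeta$ the pre-projection iterate stays in $(0,1/\kappa]$, so $\Pi_{\oVV_i}$ acts trivially and $\tv_i$ strictly decreases by at least $\epsilon\gamma$ per step, reaching $\{\tv_i\le\zeta\}$ in at most $\lceil(1/\kappa-\zeta)/(\epsilon\gamma)\rceil$ steps. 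For Claim~2 I would split: if $\tv_i(t_k)\le\zeta-\epsilon L$ then $\tv_i(t_{k+1})\le\tv_i(t_k)+\epsilon L\le\zeta$ since $\magn{F_i}\le L$; if $\tv_i(t_k)\in(\zeta-\epsilon L,\zeta]$ then the master estimate gives $F_i\le1-\tv_i(t_k)A\le1-(\zeta-\epsilon L)A\le0$, so $\tv_i(t_{k+1})\le\tv_i(t_k)\le\zeta$. Monotonicity of $\Pi_{\oVV_i}$ and $\zeta\le1/\kappa$ carry these inequalities through the projection, proving invariance. The asymptotics then follow from the formula for $n^*$: as $\zeta\to0$ one has $\delta_0\to1$ and $m\sim1/\zeta$, and keeping $\epsilon L$ a fixed fraction of $\zeta$ (compatible with $\epsilon<\zeta/L$) gives $n^*\sim\zeta^{-1}(1+\mathrm{const}/\lambda)$, so $n^*\to\infty$ and $\zeta n^*\to c$ for some positive constant $c$.

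The main obstacle is the boundary layer $\tv_i\in(\zeta-\epsilon L,\zeta]$: a single step of size $\epsilon L$ could in principle carry an iterate above $\zeta$, which is exactly why the invariance claim must be driven by the sharper coupling $(\zeta-\epsilon L)A\ge1$ rather than the cruder $\zeta A\ge1$ sufficient for Claim~1. This is compounded by $L=\sup|F_i|$ itself growing with $n$ (indeed $L\le1+(n-1)/\kappa$), so that $\epsilon<\zeta/L$ tacitly forces $\epsilon=\epsilon(\zeta)$ to shrink as the population grows; the delicate bookkeeping is to exhibit a single pair $(\epsilon,n^*)$ for which $(\zeta-\epsilon L)A\ge1$ and $\epsilon<\zeta/L$ hold simultaneously for \emph{all} $n\ge n^*$ while still yielding $\zeta n^*\to c$.
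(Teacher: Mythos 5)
Your proposal is correct and follows essentially the same route as the paper's own proof: strict scarcity $f_j\le-\delta_0$ below the threshold (the paper's $\gamma^*=-\delta_0$), a feasibility-based count of how many applications can exceed $\zeta$, a drift bound on $F_i$ that grows linearly in $n$, and the same boundary-layer argument on $(\zeta-\epsilon L,\zeta]$ for forward invariance, with matching asymptotics $\zeta n^*\to c$. The only departures are cosmetic: you merge the paper's two separate estimates (for $i\in\mathcal{I}'$ and $i\notin\mathcal{I}'$) into one master inequality $F_i\le 1-\tv_i A$, and you replace the paper's citation of Nevelson's theorem for finite-time entry with a direct step-counting bound, which is if anything more self-contained.
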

Proposition~\ref{Pr:Balance} states that if we pick $\zeta$ such that $\tv_i\leq\zeta$ implies negative matching function for all $i$, and we consider a sufficiently large number of applications $n\geq{n^*}$, then all applications will end up with a virtual platform less than $\zeta$ within finite time. Informally, when the CPU is overloaded, no application can monopolize the available resources.
\begin{proof}
See Appendix~\ref{Ap:Balance}.
\end{proof}



\subsection{Synchronous convergence}		\label{sec:SynchronousConvergence}

In the forthcoming convergence analysis, we will consider either one of the following hypotheses:
\begin{enumerate}
\item[(H1)] Let $\beta_i/\underline{s}_i < 1$ for all $i$.
\item[(H2)] Let the number of applications $n$ be sufficiently large such that, there exists $0<\zeta\leq\nicefrac{1}{\kappa}$ satisfying properties (1) and (2) of Proposition~\ref{Pr:Balance} for $n^*(\zeta)\leq{n}$.
\end{enumerate}
Hypothesis (H1) corresponds to the case where the applications are highly demanding, while (H2) corresponds to the case where the assigned resources is small compared to the number of applications. 

The asymptotic behavior of recursions (\ref{eq:RecursionForResources})--(\ref{eq:RecursionForApplications}) can be associated with the limit points of the following collection of (nonlinear) ordinary differential equations (ODE):
\begin{equation} \label{eq:OverallODE}
 \left(\begin{array}{c}
    \dot{s}_i(\tau) \\ \dot{\tv}_i(\tau)
  \end{array}
    \right) = \left(\begin{array}{c}\varphi_i(s_i(\tau),\kappa\tv_i(\tau)) \\ \Phi_i(\vs(\tau),\kappa\tvv(\tau)) \end{array}\right) + \vz_i(\tau), \quad i\in\mathcal{I},
\end{equation}
as the step size $\epsilon$ approaches zero, where $\tau$ refers to the time-index of the ODE.
The vector $\vz_i(\tau)$ represents the vector of minimum length required to drive $\tv_i(\tau)$ back to $\oVV_i$ and $s_i(\tau)$ back to $\mathcal{S}_i$. Define $(\vs^{\tau_0}(\cdot),\tvv^{\tau_0}(\cdot))$ to be the solution of the ODE~(\ref{eq:OverallODE}) starting at $(\vs(\tau_0),\tvv(\tau_0))$. 

Consider also the linear-time interpolation $(s_{i,\epsilon}(t),\tv_{i,\epsilon}(t))$ of $\{(s_{i}(t_k),\tv_i(t_k))\}_k$, defined as follows: $s_{i,\epsilon}(t) = s_i(t_k),$ and $\tv_{i,\epsilon}(t) = \tv_i(t_k)$, for every $t_k \leq t < t_{k+1}$. Introduce also the vector notation $\vs_{\epsilon}\df{\rm col}\{s_{i,\epsilon}\}_i$ and $\tvv_{\epsilon}\df{\rm col}\{\tv_{i,\epsilon}\}_i$. 

\begin{theorem}[Synchronous convergence] \label{Th:SynchronousConvergence}
The following hold:
\begin{enumerate}
\item If either (H1) or (H2) applies, the ODE~(\ref{eq:OverallODE}) exhibits stationary points $(\vs^*,\tvv^*)$, which satisfy: 
\begin{eqnarray}	\label{eq:StationaryPoints}
s_i^*=\underline{s}_i, \mbox{ and } \begin{cases}\Phi_i(\vs^*,\kappa\tvv^*)=0, \ \ \mbox{or} \\ \Phi_i(\vs^*,\kappa\tvv^*)>0,\tv_i^* = \nicefrac{1}{\kappa} \end{cases} \forall {i}.
\end{eqnarray}
\item If either (i) $\beta_i\kappa/\underline{s}_i\to{0}$ for all $i$, or (ii) $n\to\infty$, then
\begin{itemize}
\item[(a)] any stationary point of the ODE~(\ref{eq:OverallODE}) satisfies:
\begin{eqnarray} \label{eq:SpecialStationaryPoint} 
s_i^* = \underline{s}_i, \ \ \tv_i^* \to  \min\Big\{ \frac{1}\kappa, \frac{\lambda_i}{\sum_j\lambda_j}\Big\}, \quad \forall i\in\mathcal{I}.
\end{eqnarray}
\item[(b)] $(s_i(t_k),\tv_i(t_k))\to(s_i^*,\tv_i^*)$ as $k\to\infty$ and $\epsilon\to{0}$.\footnote{By $x(t)\to{A}$ for a set $A$, we mean $\lim_{t\to\infty}{\rm dist}(x(t),A)=0$.}
\end{itemize}
\end{enumerate}
\end{theorem}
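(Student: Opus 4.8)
The plan is to exploit the triangular (cascade) structure of the ODE~(\ref{eq:OverallODE}): under either hypothesis the service-level dynamics pin every $s_i$ to its lower bound $\underline{s}_i$ regardless of the current allocation, after which the resource dynamics reduce to a projected, almost-affine system whose equilibria can be read off directly.

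\emph{Part 1 — service levels at the minimum.} First I would record that $\varphi_i(s_i,v_i)=\beta_i v_i/s_i-1$ from (\ref{eq:fDefSimple}) is strictly decreasing in $s_i$, and that on any feasible platform $v_i=\kappa\tv_i\le 1$ one has $\varphi_i(\underline{s}_i,v_i)<0$: under (H1) directly from $\beta_i/\underline{s}_i<1$, and under (H2) because Proposition~\ref{Pr:Balance} forces $\tv_i\le\zeta$ in finite time with $\zeta$ chosen so that $\beta_i\kappa\zeta/\underline{s}_i-1<0$. By monotonicity in $s_i$ this gives $\varphi_i(s_i,\kappa\tv_i)<0$ for \emph{every} $s_i\ge\underline{s}_i$, bounded above by a fixed negative constant. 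Hence $\dot{s}_i=\varphi_i+\vz_i<0$ whenever $s_i>\underline{s}_i$, so $s_i$ decreases to $\underline{s}_i$ in finite time and is held there by the reflection term; every stationary point therefore has $s_i^*=\underline{s}_i$.

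\emph{Part 1 — resource stationarity and existence.} With $s_i^*=\underline{s}_i$ frozen I would analyze $\dot{\tv}_i=\Phi_i(\vs^*,\kappa\tvv)+\vz_i$ on the compact convex box $\oVV=[0,\nicefrac1\kappa]^n$. On the lower face $\tv_i=0$ one computes $\Phi_i=-\lambda_i\NEGs{\varphi_i(\underline{s}_i,0)}=\lambda_i>0$ (since $\varphi_i(\underline{s}_i,0)=-1$), so the field points strictly inward and no equilibrium can sit there. Consequently, coordinatewise, stationarity of the projected system requires either $\Phi_i=0$ in the interior or $\tv_i^*=\nicefrac1\kappa$ with $\Phi_i\ge 0$; absorbing the case $\Phi_i=0$ into the first alternative yields exactly the dichotomy~(\ref{eq:StationaryPoints}). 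Existence then amounts to a fixed point of the continuous map $\tvv\mapsto\Pi_{\oVV}\big(\tvv+\Phi(\vs^*,\kappa\tvv)\big)$ on the compact convex $\oVV$, which Brouwer's theorem supplies (equivalently, the equilibria solve a variational inequality over $\oVV$).

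\emph{Part 2.} Both (i) and (ii) entail (H1), resp. (H2), so Part 1 already gives $s_i^*=\underline{s}_i$. Under (i) $\beta_i\kappa\tv_i^*/\underline{s}_i\le\beta_i\kappa/\underline{s}_i\to 0$ since $\kappa\tv_i^*\le 1$; under (ii) the last clause of Proposition~\ref{Pr:Balance} ($\zeta n^*\to c$) forces $\zeta\to 0$, hence $\tv_i^*\le\zeta\to 0$ and again $\beta_i\kappa\tv_i^*/\underline{s}_i\to 0$. In both cases $\varphi_i(\underline{s}_i,\kappa\tv_i^*)\to -1$, so $\NEGs{\varphi_j^*}\to -1$ for all $j$. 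Substituting $\NEGs{\varphi_j^*}=-1$ into $\Phi_i=0$ collapses it to $(1-\tv_i^*)\lambda_i=\tv_i^*\sum_{j\ne i}\lambda_j$, i.e. $\tv_i^*=\lambda_i/\sum_j\lambda_j$, while the same substitution turns the boundary case into $\tv_i^*=\nicefrac1\kappa$ together with $\lambda_i/\sum_j\lambda_j>\nicefrac1\kappa$; the two alternatives merge into $\tv_i^*\to\min\{\nicefrac1\kappa,\ \lambda_i/\sum_j\lambda_j\}$, giving~(\ref{eq:SpecialStationaryPoint}) and proving 2(a). For 2(b) I would invoke the ODE method: since the increments are uniformly bounded ($L<\infty$) and, by the standing assumption $f_i\equiv\varphi_i$, the recursions~(\ref{eq:RecursionForResources})--(\ref{eq:RecursionForApplications}) are a noise-free Euler discretization of~(\ref{eq:OverallODE}), the interpolations $(\vs_\epsilon,\tvv_\epsilon)$ converge as $\epsilon\to 0$ to solutions of the ODE, and it remains to prove the ODE is globally asymptotically stable to the equilibrium of 2(a). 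The cascade supplies the stability: $s_i\to\underline{s}_i$ in finite time, after which, in the limiting regime $\NEGs{\varphi_j}=-1$, the resource field is the affine projected system $\dot{\tv}_i=\lambda_i-\tv_i\sum_j\lambda_j$, for which $V(\tvv)=\frac12\sum_i(\tv_i-\tv_i^*)^2$ is a strict Lyapunov function on $\oVV$.

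The main obstacle is this last step: making the cascade and \emph{global} (not merely local) asymptotic stability argument rigorous in the presence of the reflection terms on both boundaries, and reconciling the several limits involved — the discretization limit $\epsilon\to 0$, the time limit $k\to\infty$, and the parameter limits (i)/(ii) that are precisely what linearize the resource dynamics — so that the discrete trajectory genuinely inherits convergence to the equilibrium from the limiting ODE.
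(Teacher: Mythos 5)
Your overall architecture is the same as the paper's: an Euler/ODE approximation linking the recursions to the ODE~(\ref{eq:OverallODE}) (the paper's Proposition~\ref{Pr:LAS}), a Brouwer fixed-point argument for existence of stationary points with $s_i^*=\underline{s}_i$ (Proposition~\ref{Pr:StationaryPoints}; your map $\tvv\mapsto\Pi_{\oVV}\bigl(\tvv+\Phi(\vs^*,\kappa\tvv)\bigr)$ is a harmless variant of the paper's map built from $\tv_i^*=\min\{\nicefrac{1}{\kappa},\ \lambda_i\varphi_i/\sum_j\lambda_j\varphi_j\}$, and your lower-face argument $\Phi_i=\lambda_i>0$ at $\tv_i=0$ replaces the paper's appeal to Proposition~\ref{Pr:StarvationAvoidance}), and a quadratic Lyapunov function for part 2(b). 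Parts 1 and 2(a) are sound and essentially identical in substance to Propositions~\ref{Pr:StationaryPoints}.

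The genuine gap is exactly where you flag it, and it cannot be waved off: in 2(b) you establish strict Lyapunov decrease only for the idealized affine field $\dot{\tv}_i=\lambda_i-\tv_i\sum_j\lambda_j$, i.e., after substituting $\NEGs{\varphi_j}\equiv -1$. But (i) and (ii) are limits; for any fixed finite $n$ (or any $\beta_i/\underline{s}_i>0$) the quantities $\NEGs{\varphi_j}=\beta_j\kappa\tv_j/s_j-1$ depend on $\tvv$, and this dependence injects state-dependent terms into $\dot W$ whose sign is not obvious far from $\tvv^*$ --- which is precisely what ``global'' asymptotic stability requires. The paper's Proposition~\ref{Pr:GAS} (Appendix~\ref{Ap:GAS}) is the missing step: it writes $\dot{W}(\vs^*,\tvv)=I_1+I_2+I_3$ with $I_1=-\bigl(\sum_{j}\lambda_j\bigr)\|\tvv-\tvv^*\|_2^2$ the affine part you use, and bounds the perturbations by
\begin{equation*}
\magn{I_2}+\magn{I_3}\ \leq\ \max_{j\in\mathcal{I}}\Big\{\frac{\beta_j\kappa}{s_j^*}\Big\}\Big(n\sup_{i,\tau\geq\tau^*}\{\tv_i\}+2\Big)\|\tvv-\tvv^*\|_2^2 ,
\end{equation*}
so that under (i) the perturbation coefficient vanishes while the coefficient of $I_1$ stays fixed, and under (ii) the perturbation coefficient stays bounded (because $n\sup_i\tv_i\leq \zeta n(\zeta)\to c$ by Proposition~\ref{Pr:Balance}) while $\sum_j\lambda_j\geq n\lambda\to\infty$. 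This domination estimate --- not the exact affine limit --- is what yields $\dot W<0$ for the actual field, together with the observation that boundary equilibria with $\tv_i^*=\nicefrac{1}{\kappa}$, $\Phi_i>0$ contribute strictly negative terms, and that one may take $s_i\equiv\underline{s}_i$ after a finite time by your own cascade argument. Once GAS is in hand, your worry about reconciling the limits $\epsilon\to 0$ and $k\to\infty$ is already resolved by Proposition~\ref{Pr:LAS}, so no additional work is needed there; the only substantive hole in your proposal is the uniform domination of $I_2+I_3$ by $I_1$.
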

\begin{proof}
  The proof is an immediate implication of a series of propositions presented in detail in Section~\ref{sec:TD:SynchronousConvergence}.
\end{proof}

In other words, Theorem~\ref{Th:SynchronousConvergence} states that stationary points of the ODE~(\ref{eq:OverallODE}) are \emph{fair allocations} (except for trivial cases where a virtual platform is limited by the size of the core). Furthermore, when the CPU is overloaded (i.e., either due to (i) a high demand, or (ii) a large number of applications), then \emph{the unique fair allocation is a global attractor of the distributed process}.

\subsection{Asynchronous convergence}		\label{sec:AsynchronousConvergence}

So far, we have implicitly assumed that the adjustment dynamics (\ref{eq:RecursionForResources})--(\ref{eq:RecursionForApplications}) have synchronized clocks. However, the virtual platform, $v_i$, indirectly determines application $i$'s speed of execution. Hence, the update rate of the service level $s_i$ \emph{varies over time}.

Under asynchronous updates, the asymptotic allocation of virtual platforms may not necessarily be fair to all applications. Consider, for example, the case where an application $i$ does \textit{not} update its service level, while all other applications do. Then, under limited available resources, application $i$ will retain a
sufficiently negative matching function $f_i$, while the matching functions of all other applications steadily approach zero. This situation may lead to application $i$ getting asymptotically a larger virtual platform independently of its weight $\lambda_i$.

To address asynchronous updates, we first introduce the following notation, also visualized in
Figure~\ref{fig:Asynchronicity}.
\begin{itemize}
\bitem $t$ denotes the actual run time;
\bitem $t_k^i$ denotes the update instances of application $i$;
\bitem $t_m$ denotes the update instances of the \RM ;
\bitem $\bar{k}(t,i) \df \{k\in\mathbb{N}: t_k^i\leq t < t_{k+1}^i\}$ denotes the most recent to $t$ update index of application $i$;
\bitem $\bar{m}(t) \df \{m\in\mathbb{N}: t_m\leq{t} < t_{m+1}\}$ denotes the most recent to $t$ update index of the \RM ;
\bitem $\psi_i(m) \df \max\{m'\leq{m}: \exists ~ k \mbox{ s.t. } t_{m'}\leq{t_k^i}<t_{m}\}$ denotes the most recent update of the \RM\ after which the last update of application $i$ occurred. For example, in Figure~\ref{fig:Asynchronicity}, $\psi_i(\bar{m}(t)) = \bar{m}(t_{\bar{k}}^{i})$. We set $\psi_i(m)=0$ if there exists no ${k}$ such that $t_k^i<t_m$.  
\bitem $N_i(k) \df \bar{m}(t_{k+1}^i)-\bar{m}(t_k^i)$ is the number of times that the \RM\ has updated within $[t^i_k,t^i_{k+1})$, i.e., between two consecutive updates of application $i\in\mathcal{I}$.
\end{itemize}

\begin{figure}[t!]
  \centering
  \includegraphics[scale=1]{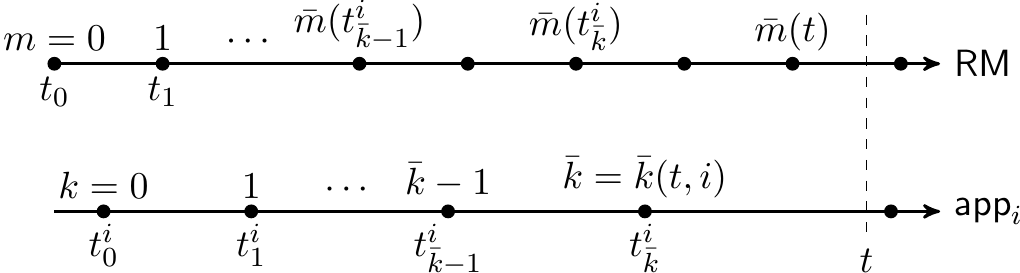}
  \caption{Visualization of asynchronous applications.}
  \label{fig:Asynchronicity}
\end{figure}

Throughout this section, we also admit the \textit{design assumption} that $1\leq N_i(k)\leq\bar{N}$ for all $i\in\mathcal{I}$ and $k=0,1,...$, and for some $\bar{N}\in\mathbb{N}$. In other words, (a) the frequency at which the \RM\ updates is larger than the frequency of every application $i$, and (b) each application $i$ updates with positive frequency. Without loss of generality, we will also assume that $t_0=0$, i.e., the \RM\ starts updating first.


Given the above notation, the update recursion of each application $i$
can be written as follows:
\begin{equation} \label{eq:ActualRecursion} 
  s_i(t_{k+1}^i) 
  = \Pi_{\mathcal{S}_i}[s_i(t_{k}^i) + \epsilon Y_{i}(t_{\bar{m}(t_k^i)})],
\end{equation}
since the computation of the observation signal $Y_{i}(\cdot)$ of each application $i$ is based upon the most recent performance measurements taken by the \RM\ at $\bar{m}(t_k^i)$.

\begin{theorem}[Asynchronous convergence] \label{Th:AsynchronousConvergence} 
Let application $i$'s actual recursion be given by (\ref{eq:ActualRecursion}), with
  \begin{equation} \label{eq:AsynchronicityCondition} 
  Y_{i}(t^i_k) = N_i(k)\cdot Y_{i}'(t^i_k),
  \end{equation}
  for each $i\in\mathcal{I}$, where $Y_{i}'(\cdot) \df {f}_i(\cdot)$. Then, the conclusions of Theorem~\ref{Th:SynchronousConvergence} continue to hold.
\end{theorem}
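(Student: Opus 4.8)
The plan is to reduce the asynchronous statement to the synchronous one already proved: I will show that, under the scaling (\ref{eq:AsynchronicityCondition}), the asynchronous iterates track the \emph{same} limiting ODE (\ref{eq:OverallODE}) that governs the synchronous scheme, so that every conclusion of Theorem~\ref{Th:SynchronousConvergence} --- all of which are assertions about the stationary points and attractors of that ODE --- transfers without change. The \RM\ updates at every tick $t_m$, whereas application $i$ updates only at the sparser instances $t_k^i$; the two families of iterates therefore live on different local clocks. The first task is to express both recursions on a single global clock indexed by the \RM\ update count $m$, which by the design assumption $1\le N_i(k)\le\bar N$ is comparable, up to the bounded factor $\bar N$, to every application's local clock.

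Fixing $m$ as the global clock, I interpolate all variables against it. The resource recursion (\ref{eq:RecursionForResources}) is then untouched: each step contributes an increment $\epsilon F_i(t_m)$ whose drift is $\Phi_i(\vs,\kappa\tvv)$, exactly as in the synchronous case. The service-level recursion (\ref{eq:ActualRecursion}) is the delicate part, since application $i$ stays still for $N_i(k)$ consecutive \RM\ ticks and then jumps once by $\epsilon N_i(k)\,f_i(t_{\bar m(t_k^i)})$.

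The crux is a drift-matching lemma: the cumulative change of $s_i$ over any global-time window equals, up to an $o(1)$ error as $\epsilon\to0$, the change it would undergo under the synchronous recursion, so that its limiting drift is again $\varphi_i(s_i,\kappa\tv_i)$. The mechanism is that the factor $N_i(k)$ is exactly the number of \RM\ ticks that the single application jump must account for: summing $N_i(k)$ over the application updates inside a window telescopes to the number of \RM\ ticks in that window, whence $\sum \epsilon N_i(k)\,f_i \approx \epsilon\sum_m f_i \approx \int\varphi_i\,d\tau$. The bound $N_i(k)\le\bar N$ enters twice. First, it makes each individual jump $O(\epsilon)$, so that the lumpy piecewise-constant interpolation and a smoothed one share the same $\epsilon\to0$ limit; together with the slow variation of the state (the iterates stay in the compact sets $\oVV_i$ and $\mathcal{S}_i$, on which $\varphi_i$ is bounded and Lipschitz) this lets me replace $f_i$ at an application update by its value at each of the $N_i(k)$ ticks it represents. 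Second, it bounds the measurement staleness $t_k^i-t_{\bar m(t_k^i)}$ by $\bar N$ \RM\ ticks, i.e. by $O(\epsilon)$ in continuous time, so using the delayed value $f_i(t_{\bar m(t_k^i)})$ in place of $f_i(t_k^i)$ is asymptotically harmless.

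With both drifts identified, I invoke the same ODE-method machinery used for Theorem~\ref{Th:SynchronousConvergence} in Section~\ref{sec:TechnicalDerivation}: the projection terms $\vz_i$ keep the iterates in $\oVV_i$ and $\mathcal{S}_i$, the increments are uniformly bounded by $\bar N$ times their synchronous bound, and the interpolated trajectories converge to solutions of (\ref{eq:OverallODE}); the asynchronous analogues of Propositions~\ref{Pr:FeasibleAllocations}--\ref{Pr:Balance} follow from the same estimates with $\epsilon$ replaced by $\bar N\epsilon$. Because the limiting ODE is literally identical to the synchronous one, the stationary-point characterization (\ref{eq:StationaryPoints}), the overload limit (\ref{eq:SpecialStationaryPoint}), and the global-attractor property all carry over, which finishes the argument. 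I expect the drift-matching lemma to be the main obstacle: one must treat the boundary windows in which an application jump straddles a window endpoint and control, uniformly in $\epsilon$, the accumulated discrepancy between the lumpy and the averaged interpolations.
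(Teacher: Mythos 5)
Your proposal is correct, and it rests on the same underlying mechanism as the paper's proof: the telescoping identity $\sum_k N_i(k)=\bar{m}(t^i_{k'})-\bar{m}(t^i_{k''})$, which shows that the scaling (\ref{eq:AsynchronicityCondition}) makes the single lumped jump $\epsilon N_i(k)f_i(t_{\bar{m}(t_k^i)})$ account exactly for the $N_i(k)$ \RM\ ticks that application $i$ skips, plus the observation that the $O(\bar{N}\epsilon)$ staleness and lumpiness vanish as $\epsilon\to 0$. The packaging, however, is genuinely different. The paper never argues about ODE drifts for the asynchronous scheme: it introduces the \emph{fictitious} recursion (\ref{eq:FictitiousUpdate}), synchronized with the \RM\ clock but driven by the delayed observations $Y_i'(t_{\psi_i(m)})$, and proves (Proposition~\ref{Pr:FictitiousEquivalence}) that both the actual asynchronous recursion (\ref{eq:ActualRecursion}) and the synchronous recursion (\ref{eq:RecursionForApplications}) stay uniformly close to it, with explicit bounds $\epsilon\ell(3\bar{N}-1)$ and $\epsilon\bar{N}\ell$ valid for all $t$; equivalence in the sense of Definition~\ref{def:EquivalentUpdates} then transfers every conclusion of Theorem~\ref{Th:SynchronousConvergence} wholesale, with no need to revisit the ODE limit or Propositions~\ref{Pr:FeasibleAllocations}--\ref{Pr:Balance} (which concern the \RM\ recursion and are untouched by asynchrony in any case). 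You instead re-run the ODE method on the asynchronous interpolations via a drift-matching lemma. This costs more work --- you must redo the Euler-type approximation and, as you note, re-verify the supporting propositions with $\epsilon$ replaced by $\bar{N}\epsilon$ --- but it buys an explicit treatment of the feedback coupling: your Lipschitz/slow-variation step, replacing $f_i$ at the stale instant by its value at each of the ticks it represents, addresses head-on the fact that the observations depend on the evolving state, something the paper's trajectory comparison handles only implicitly by treating $f_i(\cdot)$ as a single measured signal indexed by physical time and common to all three recursions. Both routes are sound; the paper's is shorter and more modular, yours is more self-contained about why delayed, state-dependent observations cause no harm.
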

\begin{proof}
  The proof will be shown in Section~\ref{sec:TD:AsynchronousConvergence}.
\end{proof}

\section{Technical Derivation} 
\label{sec:TechnicalDerivation}

In this section, the technical proofs of
Theorems~\ref{Th:SynchronousConvergence}--\ref{Th:AsynchronousConvergence}
are presented through a series of propositions.

\subsection{Synchronous convergence (Theorem~\ref{Th:SynchronousConvergence})}	
\label{sec:TD:SynchronousConvergence}

The proof of Theorem~\ref{Th:SynchronousConvergence} is an immediate implication of the following steps: (a) derivation of the \textit{ODE approximation} of the adjustment dynamics (\ref{eq:RecursionForResources})--(\ref{eq:RecursionForApplications}), provided by Proposition~\ref{Pr:LAS}, (b) characterization and stability analysis of its stationary points, provided by Propositions~\ref{Pr:StationaryPoints}--\ref{Pr:GAS}.

\subsubsection{ODE approximation}	
\label{sec:TD:SC:ODE}

We begin by establishing a connection between the asymptotic properties of the recursions (\ref{eq:RecursionForResources})--(\ref{eq:RecursionForApplications}) with the locally asymptotically stable sets of the ODE~(\ref{eq:OverallODE}).

\begin{proposition}[Synchronous ODE]		\label{Pr:LAS}
Consider the recursions (\ref{eq:RecursionForResources})--(\ref{eq:RecursionForApplications}), according to which both the \RM\ and the applications update synchronously at fixed time instances $t_k$, $k=1,2,...$. If $A$ is a locally asymptotically stable set in the sense of Lyapunov\footnote{See \cite[Definition~3.1]{Khalil92}.} for the ODE~(\ref{eq:OverallODE}), then, for any initial condition $(\vs(t_0),\tvv(t_0))$ in the domain of attraction of $A$, $(\vs(t_k),\tvv(t_k))\to{A}$ as $k\to\infty$ and $\epsilon\to{0}$.
\end{proposition}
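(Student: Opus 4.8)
The plan is to recognize Proposition~\ref{Pr:LAS} as an instance of the \emph{ODE method} for a single--timescale, constant step--size recursion, and then to transfer local asymptotic stability from the limiting ODE to the discrete iterates. Stacking the two updates (\ref{eq:RecursionForResources})--(\ref{eq:RecursionForApplications}) into $\theta(t_k)\df\mathrm{col}\{\tv_i(t_k),s_i(t_k)\}_i$ and writing $H\df\oVV\times\mathcal{S}$, the scheme reads $\theta(t_{k+1})=\Pi_{H}[\theta(t_k)+\epsilon\,G(\theta(t_k))]$, where, under the standing assumption $f_i\equiv\varphi_i$, the driving field is exactly $G=\mathrm{col}\{\Phi_i,\varphi_i\}_i$ and the recursion is purely \emph{deterministic} (there is no martingale--difference noise to average out). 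First I would introduce the natural ODE time $\tau=k\epsilon$ and the continuous interpolation $\theta^{\epsilon}(\cdot)$ of $\{\theta(t_k)\}$ on this scale, so that (\ref{eq:OverallODE}) is precisely the forward--Euler limit of the recursion as $\epsilon\to 0$.

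Second I would establish relative compactness of the interpolations. The field is bounded: $\varphi_i(s_i,v_i)=\beta_i v_i/s_i-1\in[-1,\ \beta_i\kappa/\underline{s}_i-1]$ and $|\Phi_i|\le L<\infty$ (with $L$ as defined in Section~\ref{sec:MinimumGuarantees}), so each increment is $O(\epsilon)$ and the interpolations are uniformly Lipschitz in $\epsilon$. Hence, by Arzel\`a--Ascoli, every sequence $\epsilon_n\to 0$ admits a subsequence converging uniformly on compact $\tau$--intervals. The limit is then identified as a solution of the constrained ODE~(\ref{eq:OverallODE}): continuity of $G$ makes the Euler sums converge to the corresponding time integral, and because $H$ is a box the Skorokhod reflection is coordinatewise and well posed, so the accumulated projection corrections converge to the minimal pushing term $\vz$. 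With no noise present, this identification is simply convergence of the Euler scheme to the reflected ODE and needs no averaging argument.

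Third I would carry out the stability transfer. Let $A$ be locally asymptotically stable with domain of attraction $D$, fix an initial condition in $D$, and take a Lyapunov function $V$ for $A$. On any finite horizon the interpolation stays within $O(\epsilon)$, uniformly, of the ODE solution issuing from the same point, and along the ODE $V$ decreases into an arbitrarily small sublevel set of $A$; chaining such finite windows and letting $\epsilon\to 0$ traps the tail of $\{\theta(t_k)\}$ in any prescribed neighborhood of $A$. This yields the double--limit conclusion in the precise sense that for every $\eta>0$ there is $\epsilon_0>0$ with $\limsup_k\mathrm{dist}(\theta(t_k),A)<\eta$ for all $\epsilon<\epsilon_0$. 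Feasibility along the way is guaranteed by Proposition~\ref{Pr:FeasibleAllocations}, so the iterates never leave $\cV$ and the box projection is genuinely the active constraint.

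The main obstacle is twofold. Because $\mathcal{S}_i=[\underline{s}_i,\infty)$ is unbounded, I must first secure an \emph{a priori} bound on $\{s_i(t_k)\}$ before any compactness claim: since $\varphi_i(s_i,v_i)<0$ whenever $s_i>\beta_i v_i$ and $v_i\le\kappa$, the service level cannot grow past $\max\{s_i(t_0),\beta_i\kappa\}+O(\epsilon)$, so the iterates in fact remain in a compact region and $\Pi_{\mathcal{S}_i}$ acts only at the lower endpoint. The more delicate point is intrinsic to the \emph{constant} step size: for fixed $\epsilon$ one cannot expect exact convergence, and the heart of the argument is the uniform--in--initial--condition, finite--window tracking estimate that permits chaining windows while keeping the iterates inside $D$; this is where the merely \emph{local} stability of $A$ forces care, since an excursion outside $D$ would break the transfer.
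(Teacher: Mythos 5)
Your proposal is correct and takes essentially the same route as the paper: a finite-horizon Euler/ODE approximation of the stacked deterministic recursion (the paper simply cites \cite[Theorem~1.1]{Iserles09}, where you rederive the same fact via boundedness of the field and Arzel\`a--Ascoli), followed by transferring the local asymptotic stability of $A$ to the iterates as $k\to\infty$ and $\epsilon\to 0$. The extra details you supply --- the a priori bound on $s_i$, the handling of the projection term $\vz$, and the Lyapunov-based chaining of finite windows that gives the double limit its precise $\limsup$ meaning --- are exactly the steps the paper compresses into ``the conclusion follows in a straightforward manner.''
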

\begin{proof}
  The observation signal of the overall recursion is uniformly bounded, and the vector field of the ODE~(\ref{eq:OverallODE}) is a continuous function on its domain. By \cite[Theorem~1.1]{Iserles09} (which shows convergence of Euler's method), we have that for every $\tau>0$:
  \begin{eqnarray*}
  \lim_{\epsilon\to{0}}\sup_{k=0,...,\lfloor{\tau/\epsilon}\rfloor}\|(\vs_{\epsilon}(t_k),\tvv_{\epsilon}(t_k)) - (\vs^{\tau_0}(\tau_k),\tvv^{\tau_0}(\tau_k))\| = 0,
  \end{eqnarray*}
  where $(\vs^{\tau_0}(\tau_0),\tvv^{\tau_0}(\tau_0)) = (\vs(t_0),\tvv(t_0))$ and $\tau_k\df\epsilon{k}$. Given that $A$ is locally asymptotically stable and the initial condition $(\vs(t_0),\tvv(t_0))$ lies within the region of attraction of $A$, the conclusion follows in a straightforward manner by the fact that any solution of the ODE~(\ref{eq:OverallODE}) with initial condition $(\vs(t_0),\tvv(t_0))$ converges to $A$.
\end{proof}


\subsubsection{Stationary points}
\label{sec:TD:SC:StationaryPoints}

In this section, we characterize the stationary points of the ODE~(\ref{eq:OverallODE}). In general, if an allocation $(\vs^*,\tvv^*)$ exists such that $\phi_i(s_i^*,\kappa\tv_i^*)\equiv{0}$ for all $i$, then such allocation will be a stationary point of the ODE~(\ref{eq:OverallODE}) and a \textit{fair allocation} according to Definition~\ref{def:FairAllocation}. In situations though where the CPU is overloaded, there might not be such allocations. The following proposition provides a characterization of the stationary points in such cases.

\begin{proposition}[Stationary points]  \label{Pr:StationaryPoints}
Under either (H1) or (H2), the ODE~(\ref{eq:OverallODE}) exhibits stationary points satisfying  (\ref{eq:StationaryPoints}). Furthermore, as either (i) $\beta_i/\underline{s}_i\to{0}$ or (ii) $n\to\infty$, any stationary point satisfies (\ref{eq:SpecialStationaryPoint}).
\end{proposition}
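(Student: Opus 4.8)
The plan is to decouple the two coordinates of the ODE~(\ref{eq:OverallODE}): first show that at any equilibrium the service levels sit at their lower boundary, $s_i^*=\underline{s}_i$, and then characterise and locate the resource equilibria $\tv_i^*$ on the box $\oVV$. \textbf{Collapsing the service levels.} I would argue that the reflection term in the $s_i$-equation must be active at any stationary point. Under (H1), since $\beta_i/\underline{s}_i<1$ and $v_i=\kappa\tv_i\le 1$ on the feasible set, the matching function satisfies $\varphi_i(s_i,v_i)=\beta_iv_i/s_i-1\le\beta_i/\underline{s}_i-1<0$ everywhere, so $\dot s_i=\varphi_i<0$ whenever $s_i>\underline{s}_i$; hence $s_i^*=\underline{s}_i$. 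Under (H2) I would first invoke Proposition~\ref{Pr:Balance} to confine every equilibrium to the region $\{\tv_i\le\zeta\}$ (this region is forward invariant and attracting, so no equilibrium can lie outside it); there the defining inequality $\beta_i\kappa\zeta/\underline{s}_i-1<0$ together with the monotonicity (P3) gives $\varphi_i(\underline{s}_i,\kappa\tv_i^*)\le\varphi_i(\underline{s}_i,\kappa\zeta)<0$, and the same monotone argument again forces $s_i^*=\underline{s}_i$.

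\textbf{Characterising and locating the resource equilibria.} Freezing $\vs^*=\underline{\vs}$, a stationary $\tv_i^*$ is an equilibrium of the projected scalar flow $\dot{\tv}_i=\Phi_i(\underline{\vs},\kappa\tvv)+(\text{reflection onto }\oVV_i)$. Using (P1), at $\tv_i=0$ we have $\varphi_i(\underline{s}_i,0)=-1$, whence $\Phi_i=\lambda_i>0$, which rules out the lower boundary; the only possibilities are therefore $\tv_i^*\in(0,1/\kappa)$ with $\Phi_i=0$, or $\tv_i^*=1/\kappa$ with $\Phi_i\ge0$, which is exactly (\ref{eq:StationaryPoints}). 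For existence I would apply Brouwer's theorem to the continuous self-map $\tvv\mapsto\Pi_{\oVV}[\tvv+\Phi(\underline{\vs},\kappa\tvv)]$ of the compact convex box $\oVV=[0,1/\kappa]^n$; its fixed points are precisely the equilibria just characterised.

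\textbf{The overloaded limit.} In case (i) $\beta_i/\underline{s}_i\to0$, and in case (ii) $n\to\infty$ (where Proposition~\ref{Pr:Balance} forces $\zeta\to0$ and hence the a priori bound $v_i^*=\kappa\tv_i^*\le\kappa\zeta\to0$), the matching functions obey $\varphi_i(\underline{s}_i,\kappa\tv_i^*)=\beta_i\kappa\tv_i^*/\underline{s}_i-1\to-1$, so $\NEG{\varphi_i}\to-1$ for every $i$. Substituting this into $\Phi_i$ collapses it to $\lambda_i-\tv_i^*\sum_j\lambda_j$; imposing the equilibrium conditions then yields $\tv_i^*=\lambda_i/\sum_j\lambda_j$ when this does not exceed $1/\kappa$, and $\tv_i^*=1/\kappa$ (the active upper constraint, where $\Phi_i>0$) otherwise, i.e. $\tv_i^*\to\min\{1/\kappa,\lambda_i/\sum_j\lambda_j\}$, which is (\ref{eq:SpecialStationaryPoint}).

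I expect the main obstacle to be twofold. First, transferring the discrete-time balance property of Proposition~\ref{Pr:Balance} to the ODE so as to legitimately confine its equilibria to $\{\tv_i\le\zeta\}$ under (H2). Second, making the limit in the overloaded case uniform: the convergence $\NEG{\varphi_i}\to-1$ must hold uniformly over $i$ (which needs the application parameters $\beta_i/\underline{s}_i$ to stay bounded as $n\to\infty$) so that the limiting linear relations $\lambda_i-\tv_i^*\sum_j\lambda_j=0$ determine each $\tv_i^*$ unambiguously and the cap at $1/\kappa$ emerges exactly from the active upper constraint.
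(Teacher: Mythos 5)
Your overall route is the same as the paper's: under (H1) the global negativity of $\varphi_i$ forces $s_i^*=\underline{s}_i$; the resource equilibria are characterized by the projected stationarity condition (interior zero of $\Phi_i$ or saturated upper bound); existence comes from Brouwer's theorem; and the overloaded limit collapses $\Phi_i$ to $\lambda_i-\tv_i^*\sum_j\lambda_j$, giving \eqref{eq:SpecialStationaryPoint}. Two of your local choices are in fact cleaner than the paper's: you exclude the lower boundary directly from $\Phi_i=\lambda_i>0$ at $\tv_i=0$ (via (P1)), where the paper invokes the discrete-time Proposition~\ref{Pr:StarvationAvoidance}, and you make explicit the uniformity requirement on $\beta_i/\underline{s}_i$ as $n\to\infty$, which the paper only asserts implicitly. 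Your worry about transferring Proposition~\ref{Pr:Balance} from the discrete process to the ODE is legitimate, but note that the paper's own proof does exactly this transfer (via Euler convergence), so on that point you are on equal footing.

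There is, however, one genuine gap, in the existence step under (H2). You apply Brouwer to $\tvv\mapsto\Pi_{\oVV}\bigl[\tvv+\Phi(\underline{\vs},\kappa\tvv)\bigr]$ on the full box $\oVV=[0,\nicefrac{1}{\kappa}]^n$ and assert that its fixed points are \emph{precisely} the equilibria characterised. Under (H2) this fails: since (H2) does not require $\beta_i/\underline{s}_i<1$, the box can contain points at which $\varphi_i(\underline{s}_i,\kappa\tv_i)\geq 0$ for every $i$ (the box does not encode the feasibility constraint $\sum_i\tv_i\leq 1$, which is what rules out such points when $n$ is large). At any such point every $\NEG{\varphi_i}$ vanishes, hence every $\Phi_i=0$, so it is a fixed point of your map; but if some $\varphi_i>0$ strictly, the service-level equation gives $\dot{s}_i=\varphi_i>0$ at $s_i=\underline{s}_i$, so the point is \emph{not} an equilibrium of the coupled ODE~\eqref{eq:OverallODE}. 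Thus Brouwer on the full box may hand you a spurious point. The repair is essentially the one hiding in your ``obstacle'' remark: run Brouwer instead on the sub-box $[0,\zeta]^n$, i.e.\ on $\tvv\mapsto\Pi_{[0,\zeta]^n}\bigl[\tvv+\Phi(\underline{\vs},\kappa\tvv)\bigr]$, which is automatically a self-map of a compact convex set. There all $\varphi_i<0$; on an upper face $\tv_i=\zeta$ one has, with $\lambda\df\min_j\lambda_j$ and $\gamma\df\max_j\{\beta_j\kappa\zeta/\underline{s}_j\}-1<0$, the bound $\Phi_i\leq(1-\zeta)\lambda_i+\zeta(n-1)\lambda\gamma<0$ once $n$ exceeds a threshold of the same order as $n^*(\zeta)$ in Proposition~\ref{Pr:Balance}; the lower faces are excluded as you argued. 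Hence every fixed point is interior with $\Phi_i=0$, and, since there $\varphi_i(\underline{s}_i,\kappa\tv_i^*)<0$, it is a genuine stationary point of \eqref{eq:OverallODE} satisfying \eqref{eq:StationaryPoints}. The paper reaches the same place by a different device: it uses Proposition~\ref{Pr:Balance} plus Euler convergence to restrict attention to the region where all $\varphi_j<0$, and applies Brouwer to the map $\tv_i\mapsto\min\{\nicefrac{1}{\kappa},\lambda_i\varphi_i/\sum_j\lambda_j\varphi_j\}$ of \eqref{eq:StationaryPoints:Condition}, which is only well defined on that region, so the spurious points never arise.
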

\begin{proof}
  If hypothesis (H1) is satisfied, then $\phi_i(s_i,\kappa\tv_i)<0$ for all $(\vs,\tvv)$ and $\tau\geq{0}$. In this case, any stationary point $(\vs^*,\tvv^*)$ satisfies (\ref{eq:StationaryPoints}) which equivalently implies that: $s_i^* \equiv \underline{s}_i$ and 
  \begin{equation} \label{eq:StationaryPoints:Condition}
  \tv_i^* = \min\Big\{\frac{1}{\kappa}, \frac{\lambda_i\phi_i(s_i^*,\kappa\tv_i^*)}{\sum_{j\in\mathcal{I}}\lambda_j\phi_j(s_j^*,\kappa\tv_i^*)}\Big\}.
  \end{equation}
The mapping defined by the second expression of $\tv_i^*$ is well defined since $\phi_i(s_i^*,\kappa\tv_i^*)<0$ for all $i$. Furthermore, according to Brower's fixed point theorem \cite[Corollary~6.6]{Border85}, it exhibits at least one fixed point since it is a continuous mapping defined on a compact set. The possibility that $\tv_i^*=0$ is excluded by Proposition~\ref{Pr:StarvationAvoidance}. Finally, under hypothesis (H1), if we take $\beta_i/\underline{s}_i \to 0$ for all $i$, then $\phi_i(s_i,v_i)\to{-1}$ for all $i$, which further implies property (\ref{eq:SpecialStationaryPoint}).
  
If, instead, hypothesis (H2) is satisfied, then, by Proposition~\ref{Pr:Balance}, there exists a finite $k^*$, such that, $\tv_i(t_k) \leq \zeta$ for all $k>k^*$. By convergence of Euler's method, $\phi_i(s_i^{\tau_0}(\tau),\kappa\tv_i^{\tau_0}(\tau))<0$ for all $\tau\geq\tau_{k^*}\df\epsilon{k^*}$ and all $i$. Thus, the fixed-point property (\ref{eq:StationaryPoints:Condition}) also applies. Furthermore, if $n\to{\infty}$, then by Proposition~(\ref{Pr:Balance}), $\zeta\to{0}$, and $\phi_i(s_i^{\tau_0}(\tau),\kappa\tv_i^{\tau_0}(\tau))\to{-1}$ uniformly on $\tau\geq\tau_{k^*}$ and $i$, which implies (\ref{eq:SpecialStationaryPoint}).
\end{proof}


\subsubsection{Global Asymptotic Stability (GAS)}

The following proposition provides a strong convergence property of the ODE~(\ref{eq:OverallODE}).
\begin{proposition}[GAS]	
\label{Pr:GAS}
If either (i) $\beta_i/\underline{s}_i\to{0}$ or (ii) $n\to\infty$, then the unique stationary point of the ODE~(\ref{eq:OverallODE}), satisfying property (\ref{eq:SpecialStationaryPoint}), is globally asymptotically stable in the sense of Lyapunov.
\end{proposition}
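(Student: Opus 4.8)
The plan is to exploit the saturation of the matching functions in both limiting regimes. By Proposition~\ref{Pr:StationaryPoints} and the balance estimate behind it, under (i) $\beta_i/\underline{s}_i\to{0}$ or (ii) $n\to\infty$ one has $\varphi_i(s_i,\kappa\tv_i)\to-1$ uniformly over the feasible set, so $\NEG{\varphi_j}=\varphi_j$ everywhere. Substituting $\varphi_j\equiv-1$ into the definition~(\ref{eq:FairAllocationMeasure}) of $\Phi_i$ and collecting terms gives $\Phi_i\to\lambda_i-\Lambda\,\tv_i$ with $\Lambda\df\sum_{j}\lambda_j$, so in the limit the platform equation~(\ref{eq:OverallODE}) decouples into $n$ scalar affine equations $\dot{\tv}_i=\lambda_i-\Lambda\,\tv_i+z_{i,v}$, where $z_{i,v}$ denotes the $\tv$-component of the reflection term $\vz_i$. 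Its free equilibrium is $\lambda_i/\Lambda$, which after projection onto $\oVV_i=[0,\nicefrac{1}{\kappa}]$ is exactly $\tv_i^*=\min\{\nicefrac{1}{\kappa},\lambda_i/\Lambda\}$ as in~(\ref{eq:SpecialStationaryPoint}); note $\sum_i\tv_i^*\leq\sum_i\lambda_i/\Lambda=1$, so the equilibrium is feasible.

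I would first dispose of the service-level coordinate. Since $\varphi_i<0$ is bounded away from zero in both regimes, $\dot{s}_i=\varphi_i+z_{i,s}<0$ whenever $s_i>\underline{s}_i$, so each $s_i$ decreases monotonically to its lower boundary and the reflection term pins it there; hence $\vs(\tau)\to\vs^*$ from every initial condition. Because the limiting platform dynamics no longer depends on $\vs$, the system is a cascade whose driving block $\tvv$ is autonomous, and it suffices to establish global asymptotic stability of $\tvv^*$ for the affine block.

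For that block I would take the radially unbounded quadratic candidate $V(\tvv)\df\tfrac12\sum_i(\tv_i-\tv_i^*)^2$. Using $\lambda_i=\Lambda\tv_i^*$ at the unconstrained equilibrium, $\lambda_i-\Lambda\tv_i=-\Lambda(\tv_i-\tv_i^*)$, so $\dot V=\sum_i(\tv_i-\tv_i^*)\big[-\Lambda(\tv_i-\tv_i^*)+z_{i,v}\big]$. The reflection term is controlled by the variational inequality for projected dynamics: since $\tv_i^*\in\oVV_i$ and $z_{i,v}$ points into $\oVV_i$, one has $(\tv_i-\tv_i^*)z_{i,v}\leq0$. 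Hence $\dot V\leq-\Lambda\sum_i(\tv_i-\tv_i^*)^2=-2\Lambda V<0$ off the equilibrium, which yields global asymptotic stability; uniqueness is immediate since an affine Hurwitz map has a single fixed point. For coordinates with $\lambda_i/\Lambda>\nicefrac{1}{\kappa}$ I would check separately that $\dot{\tv}_i=\lambda_i-\Lambda/\kappa>0$ on the face $\tv_i=\nicefrac{1}{\kappa}$, so the reflection keeps the coordinate pinned at $\tv_i^*=\nicefrac{1}{\kappa}$, consistent with the $\min$ in~(\ref{eq:SpecialStationaryPoint}).

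The main obstacle is that the decoupling is exact only in the strict limit: for finite $\beta_i/\underline{s}_i$ or finite $n$, $\Phi_i$ still depends on $\vs$ through the other $\varphi_j$, so the cascade is only approximately triangular. I would close this by writing $\Phi_i=\lambda_i-\Lambda\tv_i+e_i(\vs,\tvv)$ with $\sup_i|e_i|\to0$ as the regime parameter reaches its limit, and treating $e_i$ as a vanishing perturbation: the unperturbed platform block is linear and Hurwitz with rate $\Lambda$, hence input-to-state stable in $e$, and since $\vs\to\vs^*$ and $\varphi_i\to-1$ force $e\to0$ along trajectories, the resulting estimate delivers $\tvv\to\tvv^*$. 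Finally I would invoke Proposition~\ref{Pr:LAS} to transfer this global asymptotic stability of the ODE back to the discrete recursions.
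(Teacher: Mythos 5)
Your proposal follows the same basic strategy as the paper: pin the service levels at $\underline{s}_i$ first, then study the platform block with the quadratic Lyapunov function $V(\tvv)=\tfrac12\sum_i(\tv_i-\tv_i^*)^2$ (the paper's $W$), and argue that the non-ideal terms are negligible in the two limit regimes. The organization differs in one substantive way, and that difference hides a flaw. The paper centers its Lyapunov function at the \emph{actual} stationary point $(\vs^*,\tvv^*)$ of the ODE and expands $\dot W=\sum_i(\tv_i-\tv_i^*)\bigl(\Phi_i(\vs^*,\kappa\tvv)-\Phi_i(\vs^*,\kappa\tvv^*)\bigr)$ exactly into three terms $I_1+I_2+I_3$, where $I_1=-\bigl(\sum_j\lambda_j\bigr)\|\tvv-\tvv^*\|_2^2$ and, crucially, $|I_2|$ and $|I_3|$ are bounded by constants of order $\max_j\{\beta_j\kappa/\underline{s}_j\}$ (times $n\sup_i\tv_i+1$, controlled by Proposition~\ref{Pr:Balance} in regime (ii)) \emph{multiplied by} $\|\tvv-\tvv^*\|_2^2$. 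Because the error terms are quadratic in the distance to the true equilibrium, $I_1$ dominates them pointwise once the regime parameter is extreme enough, giving $\dot W<0$ for all $\tvv\neq\tvv^*$ — i.e., genuine GAS of the actual stationary point, not merely a limiting statement.

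You instead center the analysis at the idealized point $\tv_i^*=\min\{\nicefrac{1}{\kappa},\lambda_i/\Lambda\}$, $\Lambda\df\sum_j\lambda_j$, write $\Phi_i=\lambda_i-\Lambda\tv_i+e_i(\vs,\tvv)$, and treat $e_i$ as a vanishing perturbation via ISS. The flaw is your claim that ``$\vs\to\vs^*$ and $\varphi_i\to-1$ force $e\to0$ along trajectories.'' For finite $\beta_i/\underline{s}_i$ or finite $n$, $\varphi_i$ does \emph{not} tend to $-1$ along trajectories; it tends to $\varphi_i(\underline{s}_i,\kappa\tv_i(\infty))$, so $e_i$ settles at a small but nonzero residual. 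Your ISS estimate therefore yields only practical stability: convergence to a ball around the idealized point whose radius shrinks as the regime parameter approaches its limit. That suffices for the strict-limit reading of the proposition, but it does not show that the ODE's actual stationary point (which exists for finite parameters and only \emph{approaches} the explicit form in (\ref{eq:SpecialStationaryPoint})) is globally asymptotically stable, which is what the statement asserts and what the paper proves. The repair is exactly the paper's move: recenter at the true stationary point, so that the error terms inherit a factor of $\|\tvv-\tvv^*\|$ and can be dominated rather than merely bounded. Your treatment of the reflection term via $(\tv_i-\tv_i^*)z_{i,v}\leq0$ and your check on the saturated face $\tv_i=\nicefrac{1}{\kappa}$ are correct and in fact cleaner than the paper's handling of the corresponding case (a); they carry over verbatim after recentering. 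The final transfer to the discrete recursions via Proposition~\ref{Pr:LAS} is not needed here, since Proposition~\ref{Pr:GAS} is a statement about the ODE alone.
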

\begin{proof}
See Appendix~\ref{Ap:GAS}.
\end{proof}

%

\subsection{Asynchronous convergence (Theorem~\ref{Th:AsynchronousConvergence})}	
\label{sec:TD:AsynchronousConvergence}

The proof of Theorem~\ref{Th:AsynchronousConvergence} is a direct implication of establishing \emph{equivalence} between the synchronous and asynchronous update recursions satisfying property (\ref{eq:AsynchronicityCondition}). In particular, we define \emph{equivalence} between two (deterministic) update recursions as follows. 


\begin{definition}[Equivalent updates]	\label{def:EquivalentUpdates}
Two update recursions of the form (\ref{eq:ActualRecursion}) and observation signals $\{Y_{i}(t_k^i)\}$ and $\{Y_i'(t_{k}^{i})\}$, $i\in\mathcal{I}$, are equivalent if the corresponding linear-time interpolations of the updated variables, $s_{i,\epsilon}(\cdot)$ and $s_{i,\epsilon}'(\cdot)$, respectively, satisfy $$\lim_{\epsilon\to{0}}\sup_{t\geq{0}}\magn{s_{i,\epsilon}(t) - s_{i,\epsilon}'(t)} = 0.$$
\end{definition}
In other words, two deterministic update recursions of the form (\ref{eq:ActualRecursion}) are considered equivalent when the corresponding linear-time interpolations approach each other uniformly in time as $\epsilon$ approaches zero. 



We introduce the following \textit{fictitious} recursion for each $i$,
\begin{equation}	\label{eq:FictitiousUpdate}
s_i'(t_{m+1}) = \Pi_{\mathcal{S}_i}\left[s_i'(t_m) + \epsilon Y_{i}'(t_{\psi_i(m)})\right],
\end{equation}
for all $m\ge{0}$. This update is synchronized with the time index of the \RM\ and $Y_{i}'(\cdot) \df f_i(\cdot)$. Note that the fictitious observation signals are defined at times $\{t_{\psi_i(m)}\}$, i.e., at the most recent update of the \RM\ prior to the last update of application $i$. Since the \RM\ starts updating first, we also set $Y'_{i}(t_{\psi_i(m)})\equiv{0}$ for all $m$ such that $\psi_i(m)=0$, since no performance measurements are available at time $t=0$.

The following proposition shows that, if we pick appropriately the observation signals of the (actual) asynchronous update (\ref{eq:ActualRecursion}), then the asynchronous update becomes equivalent with the synchronous update of (\ref{eq:RecursionForApplications}).

\begin{proposition}[Equivalence] \label{Pr:FictitiousEquivalence} 
  For each application $i\in\mathcal{I}$, assume that its actual update recursion is given by (\ref{eq:ActualRecursion}), where $Y_{i}(t^i_k)$ satisfies property (\ref{eq:AsynchronicityCondition}). 
  Then, the following statements hold:
  \begin{enumerate}
  \item The fictitious synchronous update (\ref{eq:FictitiousUpdate}) is equivalent with the asynchronous update (\ref{eq:ActualRecursion}).
  \item The fictitious synchronous update (\ref{eq:FictitiousUpdate}) is equivalent with the synchronous update (\ref{eq:RecursionForApplications}).
  \end{enumerate}
\end{proposition}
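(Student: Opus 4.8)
The plan is to prove the two equivalences separately and then chain them. The engine for statement~(1) is a deterministic bookkeeping identity for the index map $\psi_i$ together with the half-line geometry of $\Pi_{\mathcal{S}_i}$, $\mathcal{S}_i=[\underline{s}_i,\infty)$; the engine for statement~(2) is the control of a bounded, uniformly vanishing ``hold'' bias. Throughout I write $\mu_k\df\bar{m}(t_k^i)$ for the \RM\ index at the $k$-th update of application $i$, so that $N_i(k)=\mu_{k+1}-\mu_k\in[1,\bar{N}]$; I also use that the observations $f_i=\varphi_i$ are uniformly bounded (since $f_i\ge-1$, while $v_i\le1$ and $s_i\ge\underline{s}_i$ bound $\varphi_i$ from above) and Lipschitz in $(s_i,v_i)$.

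For statement~(1) (fictitious $\equiv$ asynchronous) I would first record the counting identity $\psi_i(m')=\mu_k$ for every $m'\in\{\mu_k+1,\dots,\mu_{k+1}\}$, of which there are exactly $N_i(k)$. Hence, between two consecutive updates of application $i$, the fictitious recursion \eqref{eq:FictitiousUpdate} performs $N_i(k)$ increments all equal to $\epsilon f_i(t_{\mu_k})$, whereas the actual recursion \eqref{eq:ActualRecursion} with \eqref{eq:AsynchronicityCondition} performs the single increment $\epsilon N_i(k)f_i(t_{\mu_k})$; the two accumulate the same total. The only delicate point is the projection, and here the half-line geometry is what saves us: within one application interval the increment direction is fixed (the common factor $f_i(t_{\mu_k})$), and $\Pi_{\mathcal{S}_i}(x)=\max\{x,\underline{s}_i\}$ commutes with a monotone displacement, so splitting one projected monotone step into $N_i(k)$ projected sub-steps yields the identical endpoint. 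By induction on $k$ the two $s_i$-trajectories therefore agree at the synchronization instants $t_k^i$, up to the one-step shift built into $\psi_i$ and the initial convention $Y_i'\equiv0$ when $\psi_i=0$, both $O(\epsilon)$. The two piecewise-constant interpolations can then differ only inside an interval, by at most one accumulated block of increments, giving $\sup_{t\ge0}|s_{i,\epsilon}(t)-s_{i,\epsilon}'(t)|=O(\bar{N}\epsilon)\to0$. The decisive feature is that this bound does not grow with time, which is exactly the uniform-in-time estimate demanded by Definition~\ref{def:EquivalentUpdates}; the residual coupling through the \RM's $v$-update is of the same order $O(\bar{N}\epsilon)$ and is absorbed identically.

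For statement~(2) (fictitious $\equiv$ synchronous) both recursions advance on the \RM\ clock and add $\epsilon f_i(\cdot)$ per step, differing only in that the fictitious one evaluates the observation at the held instant $t_{\psi_i(m)}$ rather than the current $t_m$. Since $m-\psi_i(m)\le\bar{N}$ and both $s_i$ and $v_i$ move by $O(\epsilon)$ per step, the bias $\beta_m\df f_i(t_{\psi_i(m)})-f_i(t_m)$ is bounded \emph{uniformly in $m$} by $\mathrm{Lip}(\varphi_i)\cdot O(\bar{N}\epsilon)$. Thus the fictitious recursion is the synchronous recursion \eqref{eq:RecursionForApplications} driven by the same vector field plus a bias that vanishes uniformly as $\epsilon\to0$. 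I would then propagate the coupled $(s_i,v_i)$-error $e_m$ through the non-expansive projection. A generic Lipschitz estimate gives $|e_{m+1}|\le(1+\mathrm{Lip}\,\epsilon)|e_m|+\epsilon|\beta_m|$, which by Gronwall yields $\sup_{t\le T}|e|=O(\bar{N}\epsilon\,e^{\mathrm{Lip}\,T})\to0$ on any finite horizon. For the tail $t>T$ I would instead exploit the stabilizing sign $\partial\varphi_i/\partial s_i=-\beta_i v_i/s_i^2<0$: near the globally asymptotically stable limit of Propositions~\ref{Pr:StationaryPoints}--\ref{Pr:GAS}, $v_i$ is bounded below by a positive \emph{constant}, so the error recursion becomes genuinely contractive, $|e_{m+1}|\le(1-c\epsilon)|e_m|+\epsilon|\beta_m|$ with $c$ independent of $\epsilon$, pinning $|e|=O(\bar{N}\epsilon)$ there as well.

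The main obstacle is precisely this uniform-over-all-$t\ge0$ requirement in statement~(2): a naive Gronwall bound accumulates the per-step bias over the infinite horizon and diverges like $e^{\mathrm{Lip}\,t}$. The resolution is to split the time axis — finite-horizon continuous dependence on $[0,T]$ and contraction toward the globally asymptotically stable equilibrium on $(T,\infty)$, choosing $T=T(\epsilon)\to\infty$ slowly enough that both pieces vanish — which is what makes the stabilizing sign of $\partial\varphi_i/\partial s_i$ and the stability of Propositions~\ref{Pr:StationaryPoints}--\ref{Pr:GAS} essential rather than cosmetic. With both equivalences in hand, chaining them shows that the asynchronous update \eqref{eq:ActualRecursion} is equivalent to the synchronous update \eqref{eq:RecursionForApplications}, whence the conclusions of Theorem~\ref{Th:SynchronousConvergence} transfer verbatim, establishing Theorem~\ref{Th:AsynchronousConvergence}.
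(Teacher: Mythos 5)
Your part (1) is essentially the paper's own argument. Both proofs reduce to the same bookkeeping fact: because of the factor $N_i(k)$ in (\ref{eq:AsynchronicityCondition}), the fictitious recursion (\ref{eq:FictitiousUpdate}) accumulates exactly $N_i(k)$ increments of size $\epsilon f_i(t_{\bar{m}(t_k^i)})$ over the same \RM\ steps on which the actual recursion (\ref{eq:ActualRecursion}) makes the single jump $\epsilon N_i(k) f_i(t_{\bar{m}(t_k^i)})$, so the two interpolations can disagree only by one partially accumulated block, i.e.\ by $O(\bar{N}\epsilon)$ uniformly in $t$; the paper's boundary-term count gives $\epsilon\ell(3\bar{N}-1)$. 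Your observation that the half-line projection $\Pi_{\mathcal{S}_i}(x)=\max\{x,\underline{s}_i\}$ commutes with splitting a constant-sign step (since $\max\{\max\{s+j\delta,\underline{s}_i\}+\delta,\underline{s}_i\}=\max\{s+(j+1)\delta,\underline{s}_i\}$ when $\delta\leq 0$, and trivially when $\delta\geq 0$) is correct and in fact cleaner than the paper's one-line remark that the projected versions are ``simply truncations.''

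For part (2), however, you take a genuinely different route from the paper, and your version has a concrete gap. The paper's proof is again pure bookkeeping within the framing of Definition~\ref{def:EquivalentUpdates} (two recursions driven by given, bounded observation signals): it identifies the fictitious trajectory at time $t$ with the synchronous trajectory at the lagged index $\psi_i(\bar{m}(t))$, and since $\bar{m}(t)-\psi_i(\bar{m}(t))\leq\bar{N}$, the gap is bounded by the at most $\bar{N}$ omitted increments, $\epsilon\bar{N}\ell$, uniformly in $t$ --- no Lipschitz constants, no Gronwall, no stability theory whatsoever. Your proof instead treats the closed loop and leans on stability, which causes two problems. First, the decisive step of your tail argument, the contraction inequality $|e_{m+1}|\leq(1-c\epsilon)|e_m|+\epsilon|\beta_m|$, is asserted rather than derived: global asymptotic stability in the sense of Lyapunov (Proposition~\ref{Pr:GAS}) supplies no per-step rate, and the sign of $\partial\varphi_i/\partial s_i$ cannot supply one either, because at the equilibrium $s_i^*=\underline{s}_i$ the $s$-dynamics is governed by the \emph{active projection}, not by any linearization; to make this step rigorous you would need the quadratic estimate on $\dot{W}$ from the proof of Proposition~\ref{Pr:GAS} for the $\tvv$-error together with an argument that both trajectories get and stay pinned at $\underline{s}_i$. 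Second, Propositions~\ref{Pr:StationaryPoints}--\ref{Pr:GAS} hold only in the limiting regimes $\beta_i/\underline{s}_i\to 0$ or $n\to\infty$, whereas Proposition~\ref{Pr:FictitiousEquivalence} is stated (and proved by the paper) with no such hypothesis; as written, your argument only establishes a conditional version of statement (2). To be fair, your underlying worry --- that with trajectory-dependent observations a held-sample sum and a fresh-sample sum differ by an accumulation of within-block variations that naive estimates do not control uniformly over $t\geq 0$ --- points at exactly the step the paper's terse lag identification glosses over, and your two-phase (finite horizon plus contraction) architecture is the standard repair. But to count as a proof of the proposition as stated you must either carry out the contraction estimate from the Lyapunov bound of Proposition~\ref{Pr:GAS} and state the extra hypotheses explicitly, or else recover the paper's unconditional, stability-free bound.
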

\begin{proof}
  See Appendix~\ref{Ap:FictitiousEquivalence}.
\end{proof}

%

\section{Experimental Evaluation}
\label{sec:expsim}

\subsection{Simulation framework}
\label{sec:implementation}

To test the assignment of virtual platforms $v_i$ and service levels $s_i$, the resource management framework was implemented in TrueTime~\cite{cer03}. TrueTime is a MATLAB/Simulink-based tool, embedded within Simulink, that allows the simulation of tasks executing within real-time kernels.
TrueTime implements virtual processors through the Constant Bandwidth Server (CBS)~\cite{Abe98a}. Also, it is possible to adjust the CPU time allocated to the running applications (the bandwidth $v_i$), as needed by our \RM.
Moreover, TrueTime offers the ability to simulate memory management and protection, therefore being a perfect match to simulate our resource management framework.

A TrueTime kernel simulates a single CPU that hosts the execution of the \RM\ and the CBS servers (virtual processors) on top of which the applications are running.
The \RM\ observes the matching function, $f_i$, of each application $i$ and computes the new reservation $v_i$ according to~(\ref{eq:RecursionForResources}). \emph{Observe} in this case means that the \RM\ is able to read the
start and finishing time of each job and it computes the matching function according to \eqref{eq:matching}. Then, it updates the virtual platforms and communicates to the applications the observations $Y_{i}(t^i_k)\equiv f_i(t^i_k)$ according to \eqref{eq:AsynchronicityCondition}.  

It is here assumed that applications are composed of some time sensitive portions of code, called \emph{jobs}. For example, in a media encoder/decoder a
job may be the encoding/decoding of an MPEG frame. Applications are
requested to inform the \RM\ about the desired duration of each
job. Below we report a template of the job code. To ease the
presentation, we omit some implementation details, which can be found
in~\cite{MagECRTS}.
%
%
\begin{lstlisting}[style=compactCStyle]{}
  /* $\mathtt{j}$ is the job index*/
  id = signal_job_start(j);
  adjust = get_performance(j);
  /* body of the job. If service aware, it should
     modify its resource requirement by $\mathtt{adjust}$ */
  do_work(/* parameters */);
  signal_job_end(j);
\end{lstlisting}
%
%
%
%
%
\begin{figure}[t]
  \begin{center}
    \iffigures
    \includegraphics[scale=1]{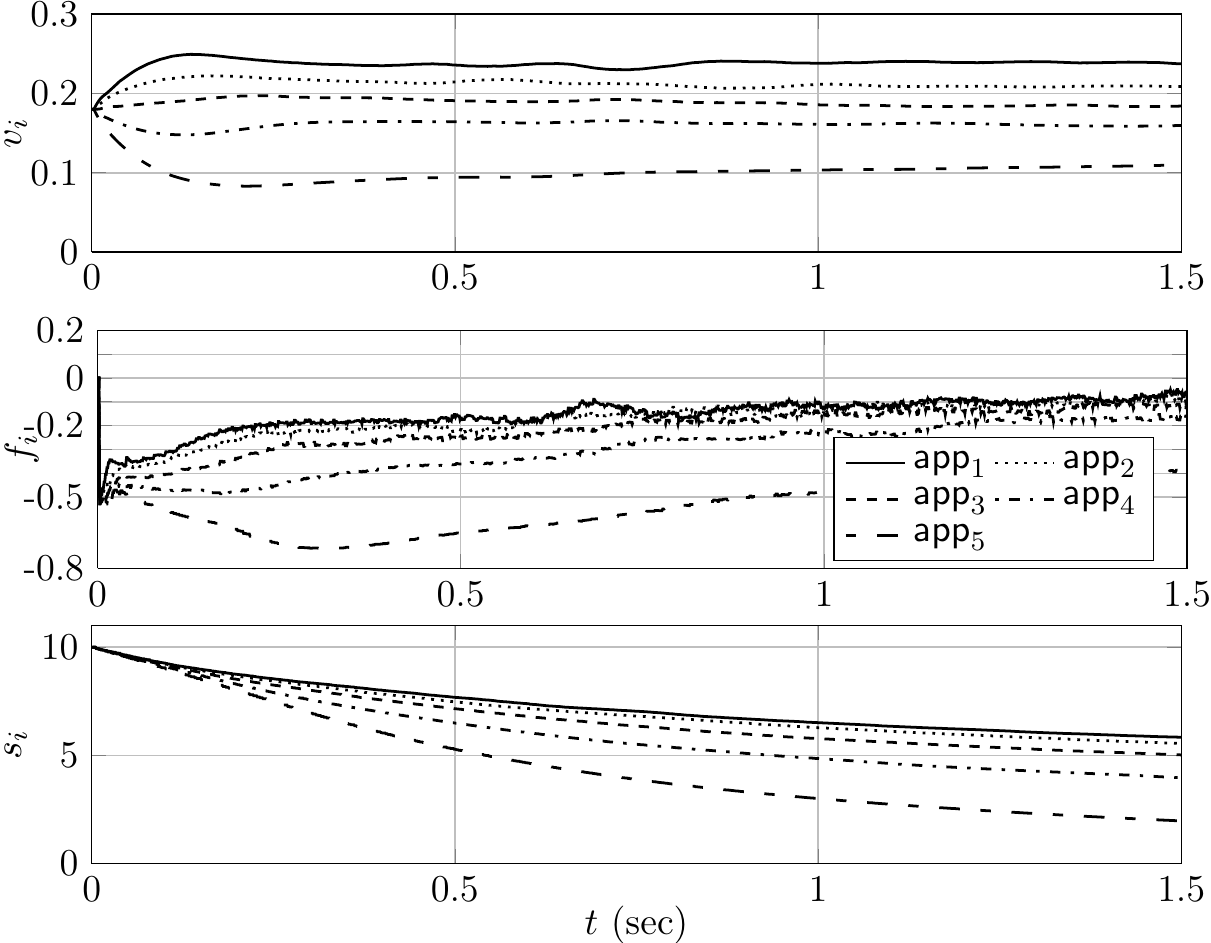}
    \fi
    \caption{TrueTime simulation of five applications with
      different weights that update their service levels.}
    \label{fig:truetimedata_slvp}
  \end{center}
\end{figure}
\begin{figure*}[t]
\centering
\begin{subfigure}[b]{\columnwidth}
  \centering
    \iffigures
    \includegraphics[scale=1]{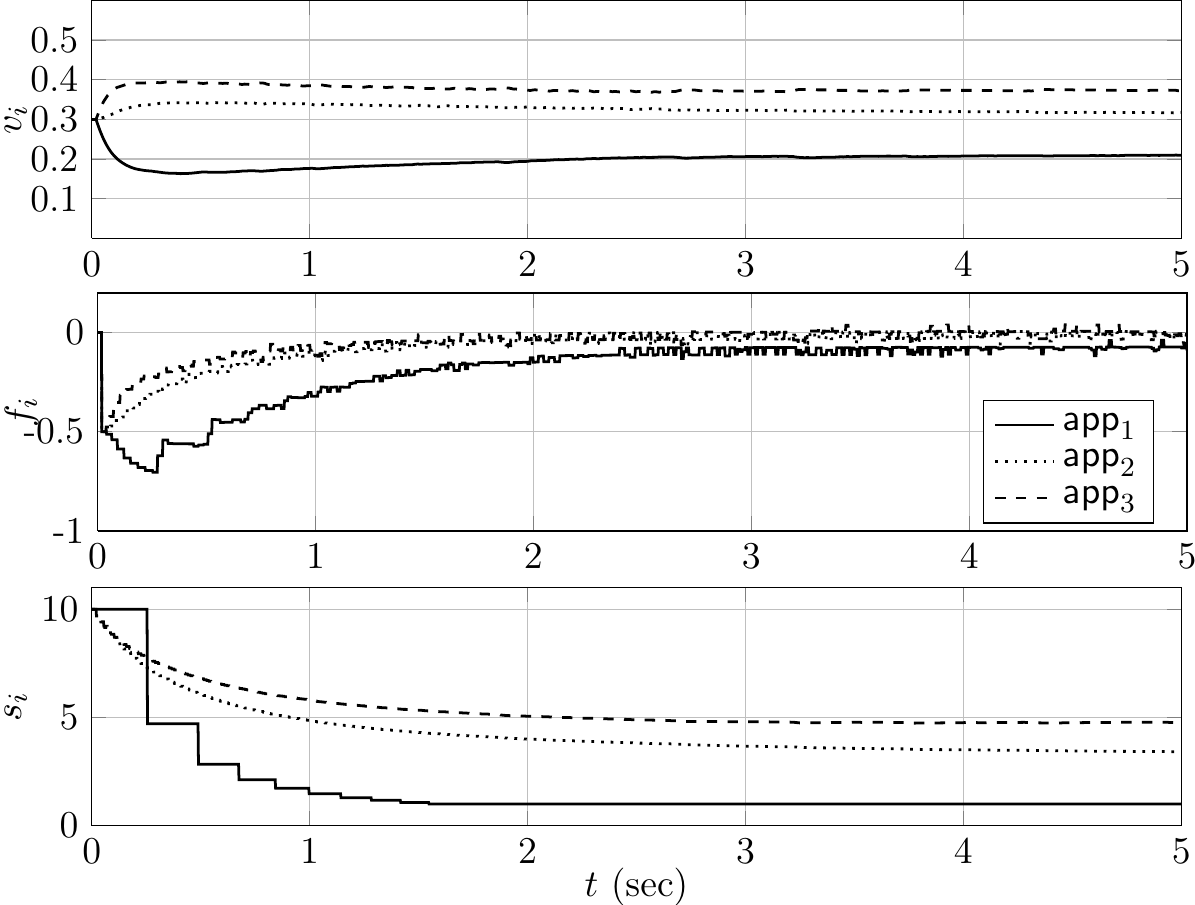}
    \fi
    \caption{\RM\ deals with asynchronicity.}
    \label{fig:truetimedata_async}
\end{subfigure}
\begin{subfigure}[b]{\columnwidth}
  \centering
    \iffigures
    \includegraphics[scale=1]{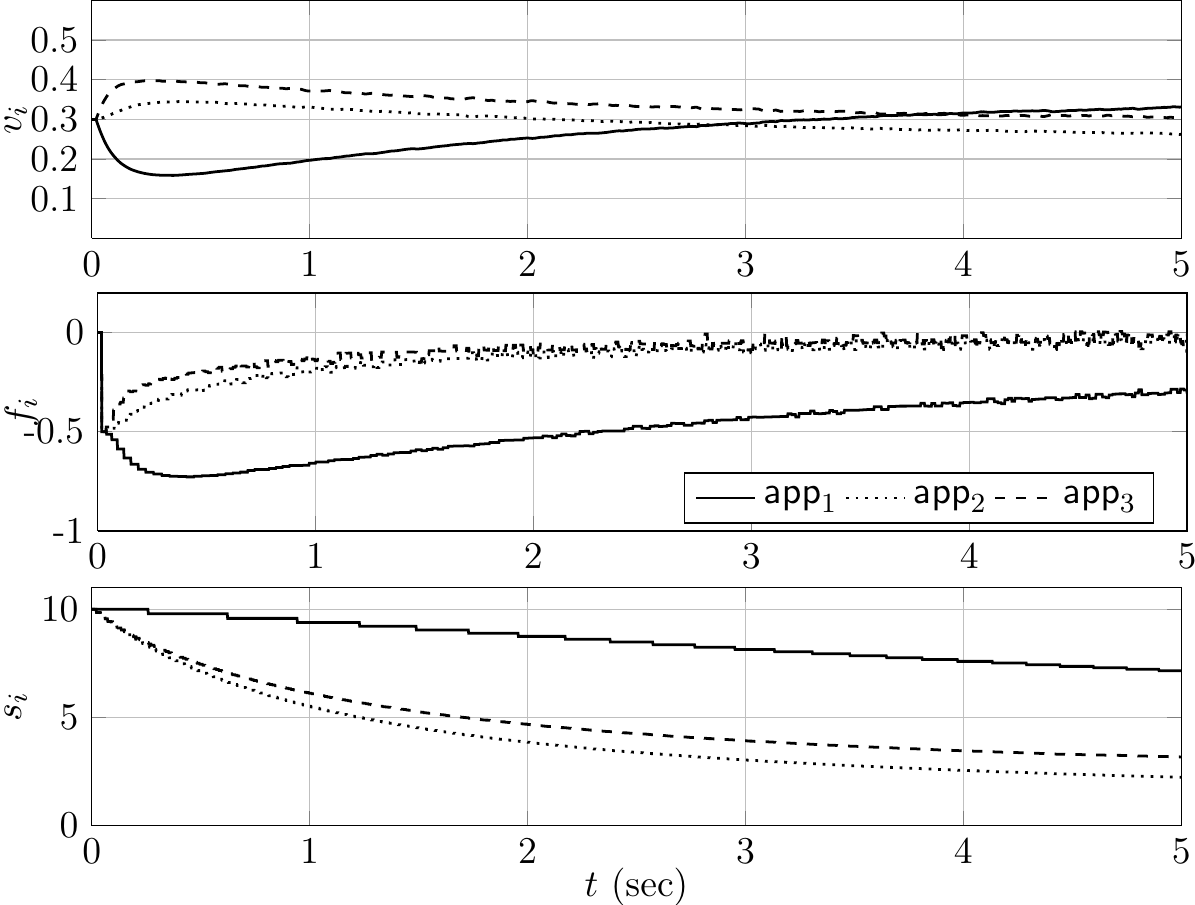}
    \fi
    \caption{\RM\ does not deal with asynchronicity.}
    \label{fig:truetimedata_async_unhandled}
\end{subfigure}%
\caption{TrueTime simulation of three applications that
      asynchronously update their service levels.}
\label{fig:asyncexp}
\end{figure*}
As the application runs, it is  asked to signal the start 
and the end of a job. This signaling actions are performed by invoking 
respectively \texttt{signal\_job\_start} and \texttt{signal\_job\_end}, 
providing as parameter the index \texttt{j} of the job. 
Within the job, the first action is the invocation of the function
\texttt{get\_performance}. This function, which is computed by the monitoring infrastructure,
returns a measurement of the service level adjustment required to
achieve a perfect matching between the service level and the virtual
platform. Jobs are assumed to be periodic.

To simulate service-aware applications, we developed a synthetic test
application, which performs some computation 
depending on the service level $s_i$. All jobs of the application have
deadline $D_i$ and are executed in a forever loop.  The execution
requirement of each job is a linear function $a_i s_i + b_i$ of the
service level.
Hence, applications with a large $a_i$ are more service-sensitive than
applications with $a_i$ close to zero.  
All applications parameters ($D_i$, $\epsilon$, $a_i$,
$b_i$), which determine the application behavior and its capacity to
adapt, are set at start time. This enables, for example, the
coexistence of fully service-aware applications together with
service-unaware ones (with $a_i=0$).

\subsection{Synchronous updates}

In the first scenario, we consider five applications running in a single core and updating synchronously with the \RM\ according to (\ref{eq:RecursionForApplications}). We pick $\lambda_{1}=0.9$, $\lambda_{2}=0.7$, $\lambda_{3}=0.5$, $\lambda_{4}=0.3$ and $\lambda_{5}=0.1$. Each application $i$ has $a_i\!\!=20$, $b_i\!\!=200$ and job deadline $D_i=1\ \text{msec}$. The initial service levels for the five applications is set to $10$ and the applications update their service levels every time they perform a new job. Finally, we restrict the maximum assignable bandwidth by the \RM\ to 90\% to simulate the fact that the operating system should have some space to execute on the same hardware. 

Figure~\ref{fig:truetimedata_slvp} reports the quantities measured during the experiment. All applications are gradually reducing their service levels as expected due to the negative matching function $f_i$. According to Theorem~\ref{Th:SynchronousConvergence}, $\mathsf{app}_1$ should receive a larger virtual platform compared to the rest of the applications due to its larger weight. The final allocation of virtual platforms may not correspond exactly to the values assigned by condition~(\ref{eq:SpecialStationaryPoint}), since the conditions partially hold at the beginning of the simulation when $f_i$ are significantly less than 0. Note though that the relative importance of the applications is preserved due to the synchronous updates.

\subsection{Asynchronous updates}

In this scenario, we investigate the effect of the asynchronicity in the final allocation of virtual platforms. We consider three different applications starting at the same time, with weights $\lambda_{1}=0.1$, $\lambda_{2}=0.5$ and $\lambda_{3}=0.8$. Each application uses resources according to $a_i\!\!=40$, $b_i\!\!=100$. Also they all have a job deadline $D_i=10\ \text{msec}$. Application $\mathsf{app}_1$ updates its service level after completing $10$ jobs, while $\mathsf{app}_2$ and $\mathsf{app}_3$ update their service levels after completing 1 job, i.e., asynchronous updates are introduced. The initial service levels of the three applications are set equal to $10$ and the upper and lower bounds on the service levels are set equal to $0$ and $20$, respectively. 

Figure~\ref{fig:truetimedata_async} reports the quantities measured during the experiment, when applications apply the scheme prescribed in the hypotheses of Theorem~\ref{Th:AsynchronousConvergence} for asynchronous updates. Each application employs a constant step-size sequence of $\epsilon = 0.03$.  It can be noticed that the \RM\ is able to maintain a virtual platform allocation that is consistent with the weights of the applications, while driving all matching functions to zero. 

To strengthen the motivation for asynchronicity management, Figure~\ref{fig:truetimedata_async_unhandled} shows the same simulation when applications do not employ the adjusted observation signal introduced in Equation~(\ref{eq:AsynchronicityCondition}), and instead they employ the originally introduced observation signal of the synchronous scheme (\ref{eq:RecursionForApplications}). Due to the slow update rate of ${\sf app}_1$, it maintains a high service level for longer period, which subsequently leads to maintaining a smaller matching function $f_1$ for longer period. Thus, even though the weight of $\mathsf{app}_1$ is smaller compared to the rest of applications, the \RM\ favors this application significantly by gradually providing more resources. This simulation demonstrates that the original scheme of Section~\ref{sec:LearningDynamics} may not be fair to applications under asynchronous updates, and it was the main motivation for the development of the updated dynamics of Theorem~\ref{Th:AsynchronousConvergence}.

%
%

\section{Conclusions}
\label{sec:conc}

We proposed a distributed management framework for allocating CPU resources to time-sensitive applications. Given that future computing systems will have to accommodate large number of applications of different demand levels, resource allocation should not be independent of the applications' performance (a notion captured through the matching function in this paper). Furthermore, given that resources are always finite, applications with higher flexibility in adjusting their demands (or service level) should decrease their service levels in overload cases. This paper proposed a distributed scheme that incorporates both these two elements, i.e., both measurements of the applications' performance, and applications' service-level adjustment. 

In this paper, service-level adjustment is performed by \textit{prescribing} certain dynamics to the applications. Such prescribed response dynamics was the first step towards the development of a fully distributed allocation scheme. In a fully distributed setup, where applications are not prescribed the response dynamics, the question is whether a \RM\ can still be designed that guarantees fair allocation of resources independently of the type of applications and their adjustment dynamics. 



\appendix

\section{Proof of Proposition~\ref{Pr:FeasibleAllocations}}	\label{Ap:FeasibleAllocations}

Let us first consider the unconstrained version of the adjustment dynamics (\ref{eq:RecursionForResources}), i.e., take $\kappa=1$. (We will revisit this assumption later.) In this case, the sum of the normalized virtual platforms can be expressed as:
\begin{eqnarray*}
 \lefteqn{\sum_{i=1}^{n}\tv_i(t_{k+1}) - 1 =}\cr && \Big(\sum_{i=1}^{n}\tv_i(t_{k}) - 1\Big)\Big(1 + \epsilon \sum_{j=1}^{n}\lambda_j\NEGs{f_j(t_k)}\Big).
\end{eqnarray*}
Given that $-1\leq \NEGs{f_i(t_k)} \leq 0$, for sufficiently small $\epsilon=\epsilon(n)>0$, the second term of the right-hand side is positive for all $k=1,2,...$. If $\sum_{i=1}^{n}\tv_i(t_k) \leq 1$, then $\sum_{i=1}^{n}\tv_i(t_{k+1}) \leq 1$. Thus, for a bounded number of applications, we may pick sufficiently small $\epsilon=\epsilon(n)$ such that $\tv_i(t_k)\in[0,1]$ and $\sum_{i=1}^{n}\tv_i(t_k) \leq 1$ for all $k=1,2,...$.
 
We consider now the constrained version of the recursion (\ref{eq:RecursionForResources}). For some time $t_k$, let us assume that $\sum_{i=1}^{n}\tv_i(t_k)\leq{1}$, i.e., the allocation is feasible. When we update this allocation using (\ref{eq:RecursionForResources}), the projection operator is activated only if $\tv_i(t_k)+\epsilon F_i(t_k) >\nicefrac{1}{\kappa}$ for some $i$. Given that $\sum_{i=1}^{n}\tv_i(t_k)+\epsilon F_i(t_k) \leq{1}$ (as we showed for the unconstrained dynamics), this quantity may only reduce after applying the projection operator. Thus, feasibility is also preserved under the constrained recursion.

\section{Proof of Proposition~\ref{Pr:StarvationAvoidance}} \label{Ap:StarvationAvoidance} 
 
First, note that $\sum_{j=1}^{n}\lambda_j\NEGs{f_j(t_k)} \geq - n,$ for all $k=0,1,...$. 
According to (\ref{eq:RecursionForResources}), the incremental difference of $\tv_i$ for the unconstrained dynamics, satisfies:
\begin{equation}	\label{eq:PreliminaryObservations:LowerBound}
\Delta\tv_i(k)\df \tv_i(t_{k+1}) - \tv_i(t_k) 
\geq \epsilon \left( -\lambda_i\NEGs{f_i(t_k)} - n\tv_i(t_k) \right).
\end{equation}
Define the set $\Gamma_i(\epsilon)\df \{\tv_i\in\cV_i: \tv_i \in (\epsilon,(L+1)\epsilon]\},$ and pick $\epsilon$ sufficiently small such that $\epsilon < \nicefrac{1}{(L+1)\kappa}$. Since $\sup_{k\in\mathbb{N}}\magn{F_i(t_k)}\leq{L}$, in order for $\tv_i(t_k)$ to approach zero, there should be a time $k^*$ at which $\tv_i(t_{k^*})\in \Gamma_i(\epsilon)$. Assuming that $\tv_i(t_k)\in \Gamma_i(\epsilon)$, if we pick $\epsilon$ sufficiently small, we have $\NEGs{f_i(t_k)}\leq \beta_i\kappa(L+1)\epsilon/\underline{s}_i - 1.$ Thus, the right-hand side of (\ref{eq:PreliminaryObservations:LowerBound}) further satisfies:
\begin{equation*}
\epsilon \left( -\lambda_i\NEGs{f_i(t_k)} - n\tv_i(t_k) \right) \geq \epsilon\lambda - (\beta_i\kappa/\underline{s}_i + n) (L+1) \epsilon^2.
\end{equation*}
For a given number of applications $n$, there exists $\epsilon^*=\epsilon^*(n)<\nicefrac{1}{(L+1)\kappa}$ with $\epsilon^*(n)\to{0}$ as $n\to\infty$, such that, if $\epsilon < \epsilon^*$, then the above quantity is strictly positive, i.e., $\Delta\tv_i(k)>0$. Since under the unconstrained dynamics the lower bound of the projection operator is not reached, the same will also hold for the constrained dynamics. From (\ref{eq:PreliminaryObservations:LowerBound}), we conclude that if $\tv_i(t_0) > \epsilon$ for all $i$, then $\inf_{k\in\mathbb{N}}\tv_i(t_k) \geq \epsilon$ $\forall{i}$.

\section{Proof of Proposition~\ref{Pr:Balance}} \label{Ap:Balance}

At time instance $k$, let $\mathcal{I}'\subseteq{\mathcal{I}}$ be the set of applications with resources greater than $\zeta$, i.e., $\mathcal{I}'\df \{i\in\mathcal{I}:\tv_i(t_k) > \zeta\}$. Pick $0<\zeta\leq\nicefrac{1}{\kappa}$ such that $\gamma^*\df\max_{\mathcal{I}\backslash\mathcal{I}'}\{\beta_i\kappa\zeta/\underline{s}_i-1\}<0$, i.e., all applications in $\mathcal{I}\backslash\mathcal{I}'$ have a negative matching function. Pick also $\epsilon<\zeta/L$.

(1) For any $i\in\mathcal{I}'$, the incremental difference of $\tv_i$ at $k$ is defined as $\Delta{\tv_i}(k)\df \tv_i(t_{k+1}) - \tv_i(t_{k}) = \epsilon F_i(t_k)$, assuming that the projection operator in (\ref{eq:RecursionForResources}) is not activated.
Note that for all $j\in\mathcal{I}\backslash\mathcal{I}'$, $\tv_j(t_k) \leq \zeta$ and
$$\sum_{j\in\mathcal{I}\backslash{i}}\lambda_j\NEGs{f_j(t_k)} \leq \Big(\sum_{j\in\mathcal{I}\backslash\mathcal{I}'}\lambda_j\Big)\gamma^* \leq \magn{\mathcal{I}\backslash\mathcal{I}'}\gamma^*\lambda.$$
Hence, according to the definition of $F_i(t_k)$, we have:
\begin{eqnarray*}
\Delta{\tv_i}(k) & \leq & -\epsilon(1-\tv_i(t_k))\lambda_i\NEGs{f_i(t_k)} +   \epsilon\magn{\mathcal{I}\backslash\mathcal{I}'}\gamma^* \tv_i(t_k)\lambda \cr
&\leq&  \epsilon(1-\zeta) + \epsilon
(n - \left\lfloor{(1-\zeta)/\zeta}\right\rfloor)\zeta\gamma^*\lambda,
\end{eqnarray*}
where the last inequality results from the fact that $-\lambda_i\NEGs{f_i(t_k)} \leq 1$, $\tv_i(t_k)>\zeta$, $1-\tv_i(t_k) < 1-\zeta$ and 
$\magn{\mathcal{I}\backslash\mathcal{I}'}\geq n - \lfloor{(1-\zeta)/\zeta}\rfloor$. For any $$n\geq n_1^*(\zeta) \df \left\lceil{\left\lfloor{\frac{(1-\zeta)}{\zeta}}\right\rfloor + \frac{-2+\zeta}{\zeta\gamma^*\lambda}}\right\rceil,$$ we have $-\zeta< -\epsilon{L}\leq\Delta{\tv_i}(k)\leq -\epsilon < 0.$ In this case, the initial assumption that the projection operator in (\ref{eq:RecursionForResources}) is not activated is also valid. Furthermore, according to \cite[Theorem~5.1]{Nevelson76}, the process $\tv_i(t_{k})$ will enter $[0,\zeta]$ within finite $k$. 

(2) For any application $i\in\mathcal{I}\backslash\mathcal{I}'$, the unconstrained incremental difference  $\Delta{\tv_i(k)}\df \epsilon F_i(t_k)$ at time $k$ satisfies:
\begin{equation*}
\Delta{\tv_i}(k) \leq \epsilon (1-\tv_i(t_k)) + \epsilon\tv_i(t_k)\gamma^*\lambda (n - \lfloor{(1-\zeta)/\zeta}\rfloor - 1),
\end{equation*}
since $-\lambda_i\NEGs{f_i(t_k)} \leq 1$, $$\sum_{j\neq{i}}\lambda_j\NEGs{f_j(t_k)}\leq\sum_{j\in\mathcal{I}\backslash\mathcal{I}'\backslash{i}}\lambda\NEGs{f_j(t_k)} \leq \gamma^*\lambda \magn{\mathcal{I}\backslash\mathcal{I}'\backslash{i}},$$ and $\magn{\mathcal{I}\backslash\mathcal{I}'\backslash{i}} \geq (n-\lfloor{(1-\zeta)/\zeta}\rfloor -1)$.
In order for the process $\tv_i(t_k)$ to exit the set $[0,\zeta]$, there should be a time instance $k^*$ at which $\tv_i(t_{k^*}) \in (\zeta-\epsilon L,\zeta]$. For any $\tv_i(t_k)\in (\zeta-\epsilon L,\zeta]$, we have:
\begin{equation*}
\Delta{\tv_i(k)} \leq \epsilon(1-\zeta+\epsilon{L}) + \epsilon(\zeta-\epsilon{L})\gamma^*\lambda(n - \lfloor{(1-\zeta)/\zeta}\rfloor - 1).
\end{equation*}
If the number of applications satisfy: $$n\geq n_2^*(\zeta) \df \left\lceil{1+\left\lfloor{\frac{1-\zeta}{\zeta}}\right\rfloor + \frac{-2+\zeta-\epsilon{L}}{(\zeta-\epsilon{L})\gamma^*\lambda} }\right\rceil,$$ then, $\Delta{\tv_i(k)}\leq -\epsilon < 0$, which implies that the unconstrained process $\{\tv_i(t_k)\}$ will not exit $[0,\zeta]$ for all future times. The same will hold for the constrained process.

Finally, by defining $n^*(\zeta)\df\max\{n_1^*,n_2^*\}$, both statements (1) and (2) will hold for any $n\geq n^*$. Note that $n^*\to\infty$ and $\zeta n^* \to c $ as $\zeta\to{0}$, for some $c>0$.

\section{Proof of Proposition~\ref{Pr:GAS}} \label{Ap:GAS}

Let $(\vs^*,\tvv^*)$ be a stationary point of the ODE~(\ref{eq:OverallODE}), where by property (\ref{eq:StationaryPoints}) satisfies $s_i^* = \underline{s}_i$. Define the function
$W(\vs,\tvv) \df \nicefrac{1}{2}(\tvv-\tvv^*)\tr(\tvv-\tvv^*) \geq 0.$
The derivative of $W$ with respect to time $\tau$ satisfies:
\begin{equation}	\label{eq:GAS:Derivative}
  \dot{W}(\vs,\tvv)  = \sum_{i=1}^{n}(\tv_i-\tv_i^*)\tr \Phi_i(\vs,\kappa\tvv).
\end{equation}
Note that at the stationary point and for every $i$, either one of the following holds: (a) $\tv_i^*=\nicefrac{1}{\kappa}$ and $\Phi_i(\vs^*,\kappa\tvv^*)>0$, or (b) $\Phi_i(\vs^*,\kappa\tvv^*)=0$. Note that case (a) implies $(\tv_i-\tv_i^*)\tr \Phi_i(\vs,\kappa\tvv) < 0$ for all $\tv_i<\nicefrac{1}{\kappa}$ and all $s_i>\underline{s}_i$. Thus, in case condition (a) is satisfied for some applications, the corresponding additive terms in (\ref{eq:GAS:Derivative}) will be strictly negative. Without loss of generality, it suffices to investigate the derivative (\ref{eq:GAS:Derivative}) when all applications satisfy condition (b). Furthermore, it suffices to consider the case where $s_i\equiv s_i^*=\underline{s}_i$, since under hypothesis (i) $\phi_i(s_i,v_i) < 0$ for all $s_i\in\mathcal{S}_i$, $v_i\in\cV_i$ and $i$, and under hypothesis (ii) there exists a time $\tau^*$ after which $\phi_i(s_i(\tau),\kappa\tv_i(\tau))<0$, for all $\tau\geq{\tau}^*$ (according to Proposition~\ref{Pr:Balance} and convergence of Euler's method shown in Proposition~\ref{Pr:LAS}). Thus, for all $\tau\geq{\tau^*}$, we have:
\begin{eqnarray*}
  \lefteqn{\dot{W}(\vs^*,\tvv)}\cr & = & \sum_{i=1}^{n}(\tv_i-\tv_i^*) \left(\Phi_i(\vs^*,\kappa\tvv) - \Phi_i(\vs^*,\kappa\tvv^*)\right) \cr 
  & = & -\Big(\sum_{j=1}^{n}\lambda_j\Big)\sum_{i=1}^{n}|\tv_i-\tv_i^*|^2 -\sum_{i=1}^{n}\frac{\lambda_i\beta_i\kappa}{s_i^*}|\tv_i-\tv_i^*|^2 \cr
  && -\sum_{i=1}^{n}(\tv_i-\tv_i^*)\sum_{j=1}^{n}\frac{\lambda_j\beta_j\kappa}{s_j^*} \left(\tv_i^*\tv_j^* - \tv_i\tv_j\right).
\end{eqnarray*}
We denote by $I_1$, $I_2$ and $I_3$ the three terms of the r.h.s. of the above expression, i.e., $\dot{W}(\vs^*,\tvv)\equiv I_1+I_2+I_3$. Note that:
$I_1 = - (\sum_{j=1}^{n}\lambda_j)\left\|\tvv - \tvv^*\right\|_2^2$,
$\magn{I_2} \leq \max_{i\in\mathcal{I}}\{\nicefrac{\beta_i\kappa}{s_i^*}\}\|\tvv-\tvv^*\|_2^2$
and
\begin{eqnarray*}
\magn{I_3} & \leq & \sum_{i=1}^{n}|\tv_i-\tv_i^*|\sum_{j=1}^{n}\frac{\lambda_j\beta_j\kappa}{s_j^*}\left|\tv_i\tv_j - \tv_i^*\tv_j^* \right| \cr 
& \leq & \sum_{i=1}^{n}|\tv_i-\tv_i^*|\sum_{j=1}^{n}\frac{\lambda_j\beta_j\kappa}{s_j^*}\Big(\left|\tv_j-\tv_j^*\right| \tv_i + \left|\tv_i-\tv_i^*\right| \tv_j^* \Big) \cr 
& \leq & \sup_{i,\tau\geq{\tau^*}}\{\tv_i\}\max_{j\in\mathcal{I}}\Big\{\frac{\lambda_j\beta_j\kappa}{s_j^*}\Big\}\|\tvv-\tvv^*\|_1^2 + \cr &&
\max_{j\in\mathcal{I}}\Big\{\frac{\lambda_j\beta_j\kappa}{s_j^*}\Big\}\sum_{j=1}^{n}\tv_j^* \|\tvv-\tvv^*\|_2^2.
\end{eqnarray*}
Given that $\|\tvv-\tvv^*\|_1 \leq \sqrt{n}\|\tvv-\tvv^*\|_2$, $\sum_{j=1}^{n}\tv_j^*\leq{1}$ and $\lambda_i \le 1$ for all $i$, we have
\begin{eqnarray*}
\magn{I_3} \leq \max_{j\in\mathcal{I}}\Big\{\frac{\beta_j\kappa}{s_j^*}\Big\} \left( n \sup_{i,\tau\geq{\tau^*}}\{\tv_i\} + 1 \right) \|\tvv-\tvv^*\|_2^2.
\end{eqnarray*}
Under hypothesis (i), i.e., as $\beta_i/\underline{s}_i\to{0}$ for all $i$, and for some fixed size of applications $n$, the first term, $I_1$, dominates in size the term $I_2+I_3$ uniformly in time. Since $I_1<0$ for any $\tvv\neq\tvv^*$, we have that $\dot{W}(\vs^*,\tvv)<0$ for any $\tvv\neq\tvv^*$. Under hypothesis (ii), i.e., as $n\to\infty$, Proposition~\ref{Pr:Balance} implies that $n\sup_{i,\tau\geq{\tau^*}}\{\tv_i(\tau)\}\le \zeta n(\zeta)$ approaches a positive constant. Thus, the first term $I_1$ dominates in size the term $I_2+I_3$ when $n\to\infty$. In this case, $\dot{W}(\vs^*,\tvv)<0$ for any $\tvv\neq\tvv^*$ such that $\tv_i \leq \zeta$ for all $i$. Thus, under either (i) or (ii), and by \cite[Theorem~3.2]{Khalil92}, we conclude that the unique stationary point $(\vs^*,\tvv^*)$, satisfying (\ref{eq:SpecialStationaryPoint}), is globally asymptotically stable.

\section{Proof of Proposition~\ref{Pr:FictitiousEquivalence}} \label{Ap:FictitiousEquivalence}

(1) Let us first consider the unconstrained versions of the actual (\ref{eq:ActualRecursion}) and fictitious update (\ref{eq:FictitiousUpdate}). In this case, the corresponding linear-time interpolations satisfy for any run time $t>0$:
 \begin{eqnarray*}
   \lefteqn{s_{i,\epsilon}(t) - s_{i,\epsilon}'(t)} \cr 
   & = & s_i(t_{\bar{k}(t,i)}^i) - s_i'(t_{\bar{m}(t)}) \cr
   & = & \sum_{k=0}^{\bar{k}(t,i)-1}\epsilon Y_{i}(t_{\bar{m}(t_{k}^i)}) - \sum_{m=0}^{\bar{m}(t)-1}\epsilon Y_i'(t_{\psi_i(m)}) \cr 
   & = & \sum_{k=0}^{\bar{k}(t,i)-1}\epsilon Y_{i}(t_{\bar{m}(t_{k}^{i})}) - \sum_{m=\bar{m}(t_{\bar{k}(t,i)-1}^i)+1}^{\bar{m}(t)-1}\epsilon Y_i'(t_{\psi_i(m)}) \cr &&
   -\sum_{k=0}^{\bar{k}(t,i)-2}\epsilon N_i(k) Y_i'(t_{\psi_i(\bar{m}(t_{k+1}^i))}), \cr &&
 \end{eqnarray*}
where the third summation of the r.h.s. summarizes all the observation terms of the \RM\ up to time index $\bar{m}(t_{\bar{k}(t,i)-1}^{i})$, and the second summation summarizes all the remaining terms, i.e., from $\bar{m}(t_{\bar{k}(t,i)-1}^i)+1$ until $\bar{m}(t)-1$. For example, in Figure~\ref{fig:Asynchronicity}, the third summation corresponds to all time indices up to $\bar{m}(t_{\bar{k}-1}^{i})$, while the second summation corresponds to the remaining terms up to time $\bar{m}(t)-1$. Since $1\leq N_i(k)\leq\bar{N}$, the latter terms may be zero to maximum $\bar{N}$ in numbers. Also, $\psi_i(\bar{m}(t_{k+1}^i)) = \bar{m}(t_k^i)$. Thus, we have: 
 \begin{eqnarray*}
  \lefteqn{s_{i,\epsilon}(t) - s_{i,\epsilon}'(t) = } \cr 
  && \sum_{k=0}^{\bar{k}(t,i)-2}\epsilon \left(Y_{i}(t_{\bar{m}(t_k^i)}) - N_i(k) Y_i'(t_{\bar{m}(t_k^i)}) \right) + \cr &&
   \epsilon Y_{i}(t_{\bar{m}(t_{\bar{k}(t,i)-1}^{i})}) - \sum_{m=\bar{m}(t_{\bar{k}(t,i)-1}^i)+1}^{\bar{m}(t)-1}\epsilon Y_i'(t_{\psi_i(m)}).
 \end{eqnarray*}
 From the last expression, we observe that if 
 $Y_{i}(t_{k}^i) = N_i(k) Y_i'(t_{k}^i),$ then, the first term in the r.h.s. becomes identically zero. Given that $\sup_{i,t\ge{0}}\magn{Y_i'(t)} \le \ell$, for some $\ell>0$, we also have $\sup_{i,t\geq{0}}\magn{Y_{i}(t)}\leq \ell \bar{N}$. Hence, 
 \begin{eqnarray*}
  \vert s_{i,\epsilon}(t) - s_{i,\epsilon}'(t)\vert & \leq & \epsilon \ell\bar{N} + \epsilon\Big(\bar{m}(t)-\bar{m}(t_{\bar{k}(t,i)-1}^{i})-1\Big)\ell \cr & \leq & \epsilon \ell (3\bar{N} - 1),
 \end{eqnarray*}
 which approaches zero as $\epsilon\to{0}$ uniformly in time. Thus, we showed that $s_{i,\epsilon}(\cdot)$ and $s_{i,\epsilon}'(\cdot)$ are equivalent according to Definition~\ref{def:EquivalentUpdates}. Since the corresponding projected versions are simply truncations to the set $\mathcal{S}_i\equiv[\underline{s}_i,\infty)$, the same conclusion applies for the projected versions.

(2) Let $\{s_i''(t_k^i)\}_k$ denote the service level recursion under the synchronous update (\ref{eq:RecursionForApplications}) in order to distinguish it from the actual asynchronous one (\ref{eq:ActualRecursion}). Let us also denote $s_{i,\epsilon}''(\cdot)$ the corresponding linear-time interpolation. Similarly to the proof of part (1), it suffices to consider the unconstrained recursions. For every $i$ and $t\geq{0}$, and since $1\leq N_i(k)\leq\bar{N}$, we have:
\begin{eqnarray*}
 \vert s_{i,\epsilon}'(t) - s_{i,\epsilon}''(t) \vert & = & \vert s_i'(t_{\psi_i(\bar{m}(t))}) - s_i''(t_{\bar{m}(t)}) \vert \cr 
 & \leq & \sum_{k=\psi_i(\bar{m}(t))}^{\bar{m}(t)-1}\epsilon \vert Y_{i}'(t_k) \vert \cr
 & \leq & \epsilon (\bar{m}(t) - \psi_i(\bar{m}(t)) \ell \leq \epsilon \bar{N} \ell,
\end{eqnarray*}
which implies equivalence of $s_{i,\epsilon}'(\cdot)$ and $s_{i,\epsilon}''(\cdot)$.


\bibliographystyle{abbrv} \bibliography{automatica-gtrm_new}

\begin{thebibliography}{10}

\bibitem{Abe98a}
L.~Abeni and G.~Buttazzo.
\newblock Integrating multimedia applications in hard real-time systems.
\newblock In {\em Proc. of the $19^{\text{th}}$ IEEE Real-Time Systems
  Symposium}, pages 4--13, Madrid, Spain, Dec. 1998.

\bibitem{Arz11}
K.-E. {\AA}rz{\'e}n, V.~Romero~Segovia, S.~Schorr, and G.~Fohler.
\newblock Adaptive resource management made real.
\newblock In {\em Proc. 3rd Workshop on Adaptive and Reconfigurable Embedded
  Systems}, Chicago, IL, USA, Apr. 2011.

\bibitem{Bin11}
E.~Bini, G.~C. Buttazzo, J.~Eker, S.~Schorr, R.~Guerra, G.~Fohler, K.-E.
  {\AA}rz{\'e}n, R.~Vanessa, and C.~Scordino.
\newblock Resource management on multicore systems: The {ACTORS} approach.
\newblock {\em IEEE Micro}, 31(3):72--81, 2011.

\bibitem{Border85}
K.~Border.
\newblock {\em Fixed Point Theorems with Applications to Economics and Game
  Theory}.
\newblock Cambridge University Press, 1985.

\bibitem{cer03}
A.~Cervin, D.~Henriksson, B.~Lincoln, J.~Eker, and K.-E. {\AA}rz{\'e}n.
\newblock How does control timing affect performance? {A}nalysis and simulation
  of timing using {J}itterbug and {T}rue{T}ime.
\newblock {\em IEEE Control Systems Magazine}, 23(3):16--30, June 2003.

\bibitem{Cha13}
G.~Chasparis, M.~Maggio, K.-E. \AA{}rz\'en, and E.~Bini.
\newblock Distributed management of {CPU} resources for time-sensitive
  applications.
\newblock In {\em Proc. of the 2013 American Control Conference}, 2013.

\bibitem{ChasparisShamma11_DGA}
G.~Chasparis and J.~Shamma.
\newblock Distributed dynamic reinforcement of efficient outcomes in multiagent
  coordination and network formation.
\newblock {\em Dynamic Games and Applications}, 2(1):18--50, 2012.

\bibitem{Cuc10}
T.~Cucinotta, L.~Palopoli, L.~Abeni, D.~Faggioli, and G.~Lipari.
\newblock On the integration of application level and resource level {QoS}
  control for real-time applications.
\newblock {\em IEEE Transactions on Industrial Informatics}, 6(4):479--491,
  Nov. 2010.

\bibitem{DiNatale10}
M.~Di~Natale and A.~Sangiovanni-Vincentelli.
\newblock Moving from federated to integrated architectures in automotive: The
  role of standards, methods and tools.
\newblock {\em Proc. of the IEEE}, 98(4):603--620, 2010.

\bibitem{Eke00}
J.~Eker, P.~Hagander, and K.-E. {\AA}rz{\'e}n.
\newblock A feedback scheduler for real-time controller tasks.
\newblock {\em Control Engineering Practice}, 8(12):1369--1378, Jan. 2000.

\bibitem{Gro05}
D.~Grosu and A.~T. Chronopoulos.
\newblock Noncooperative load balancing in distributed systems.
\newblock {\em Journal of Parallel and Distributed Computing},
  65(9):1022--1034, 2005.

\bibitem{Iserles09}
A.~Iserles.
\newblock {\em A First Course in the Numerical Analysis of Differential
  Equations}.
\newblock Cambridge University Press, 2009.

\bibitem{Joh06}
B.~Johansson, C.~Adam, M.~Johansson, and R.~Stadler.
\newblock Distributed resource allocation strategies for achieving quality of
  service in server clusters.
\newblock In {\em Proc. of the $45^\text{th}$ IEEE Conference on Decision and
  Control}, pages 1990--1995, Dec. 2006.

\bibitem{Khalil92}
H.~Khalil.
\newblock {\em Nonlinear Systems}.
\newblock Prentice-Hall, 1992.

\bibitem{Lee99}
C.~Lee, J.~P. Lehoczky, D.~Sieworek, R.~Rajkumar, and J.~Hansen.
\newblock A scalable solution to the multi-resource {QoS} problem.
\newblock In {\em Proc. of the $20^{\text{th}}$ IEEE Real-Time Systems
  Symposium}, pages 315--326, Phoenix, AZ, Dec. 1999.

\bibitem{MagECRTS}
M.~Maggio, E.~Bini, G.~Chasparis, and K.-E. {\AA}rz{\'e}n.
\newblock A game-theoretic resource manager for {RT} applications.
\newblock In {\em 25th Euromicro Conference on Real-Time Systems, ECRTS13},
  Paris, France, July 2013.

\bibitem{Mer94}
C.~W. Mercer, S.~Savage, and H.~Tokuda.
\newblock Processor capacity reserves: Operating system support for multimedia
  applications.
\newblock In {\em Proc. of IEEE International Conference on Multimedia
  Computing and Systems}, pages 90--99, Boston, MA, U.S.A., May 1994.

\bibitem{Nevelson76}
M.~B. Nevelson and R.~Z. Hasminskii.
\newblock {\em Stochastic Approximation and Recursive Estimation}.
\newblock American Mathematical Society, Providence, RI, 1976.

\bibitem{Raj97a}
R.~Rajkumar, C.~Lee, J.~Lehoczky, and D.~Siewiorek.
\newblock A resource allocation model for {QoS} management.
\newblock In {\em Proc. of the IEEE Real Time System Symposium}, 1997.

\bibitem{Soj11}
M.~Sojka, P.~P\'\i\v{s}a, D.~Faggioli, T.~Cucinotta, F.~Checconi,
  Z.~Hanz\'alek, and G.~Lipari.
\newblock Modular software architecture for flexible reservation mechanisms on
  heterogeneous resources.
\newblock {\em Journal of Systems Architecture}, 57(4):366--382, 2011.

\bibitem{Ste99}
D.~C. Steere, A.~Goel, J.~Gruenberg, D.~McNamee, C.~Pu, and J.~Walpole.
\newblock A feedback-driven proportion allocator for real-rate scheduling.
\newblock In {\em Proc. of the $3^\text{rd}$ Symposium on Operating Systems
  Design and Implementation}, Feb. 1999.

\bibitem{Sub08}
R.~Subrata, A.~Y. Zomaya, and B.~Landfeldt.
\newblock A cooperative game framework for {QoS} guided job allocation schemes
  in grids.
\newblock {\em IEEE Transactions on Computers}, 57(10):1413--1422, Oct. 2008.

\bibitem{Wei10}
G.~Wei, A.~V. Vasilakos, Y.~Zheng, and N.~Xiong.
\newblock A game-theoretic method of fair resource allocation for cloud
  computing services.
\newblock {\em The Journal of Supercomputing}, 54(2):252--269, Nov. 2010.

\bibitem{Weibull97}
J.~Weibull.
\newblock {\em Evolutionary Game Theory}.
\newblock MIT Press, Cambridge, MA, 1997.

\end{thebibliography}

\end{document}